\documentclass[11pt,reqno]{amsart}
\usepackage{amsmath,amssymb,amsthm,amsfonts,enumerate,hyperref, caption, verbatim}
\captionsetup[table]{skip=5pt}
\pagestyle{plain}
\addtolength{\footskip}{0.5cm}
\setlength{\parindent}{0pt}
\setlength{\marginparwidth}{0.25cm}
\addtolength{\textwidth}{2cm}
\setlength{\hoffset}{-0.75cm}
\setlength{\voffset}{-2cm}
\addtolength{\textheight}{2cm}

\theoremstyle{plain}
\newtheorem{theorem}{Theorem}[section]
\newtheorem{theorem2}{Theorem}[section]
\newtheorem{proposition}[theorem2]{Proposition}
\newtheorem{cor}[theorem]{Corollary}
\newtheorem{lemma}[theorem2]{Lemma}

\theoremstyle{definition}
\newtheorem{definition}[theorem2]{Definition}

\theoremstyle{remark}

\numberwithin{equation}{section}

\newcommand{\Riem}{\mathrm{Rm}}

\newcommand{\vol}{\mathrm{Vol}}

\newcommand{\Ric}{\mathrm{Ric}}

\newcommand{\tr}{\mathrm{tr}}

\renewcommand{\div}{\mathrm{div}}

\begin{document}
\title{Destabilising compact warped product Einstein manifolds}
\begin{abstract}
The linear stability of warped product Einstein metrics as fixed points of the Ricci flow is investigated. We generalise the results of Gibbons, Hartnoll and Pope and show that in sufficiently low dimensions, all warped product Einstein metrics are unstable.  By exploiting the relationship between warped product Einstein metrics, quasi-Einstein metrics and Ricci solitons, we introduce a new destabilising perturbation (the Ricci variation) and show that certain infinite families of warped product Einstein metrics will be unstable in high dimensions. 
\end{abstract}
\author{Wafaa Batat}
\address{Ecole Nationale Polytechnique d'Oran,
	B.P 1523 El M'naouar, 31000 Oran, Algeria}
\email{batatwafa@yahoo.fr}
\author{Stuart Hall}
\address{School of Mathematics and Statistics, Herschel Building, Newcastle University, Newcastle-upon-Tyne, NE1 7RU} 
\email{stuart.hall@ncl.ac.uk}
\author{Thomas Murphy}
\address{Department of Mathematics, California State University Fullerton, 800 N. State College Bld., Fullerton, CA 92831, USA.}
\email{tmurphy@fullerton.edu}
\maketitle

\section{Introduction}
\label{sec:1}
\subsection{Main results}
In 2003, Perelman made spectacular use of Hamilton's Ricci flow to prove Thurston's geometrization conjecture \cite{Per1}, \cite{Per3} and \cite{Per2}. Put simply, geometrization says that three-dimensional manifolds decompose into pieces that can each be endowed with a canonical geometry. In order to extend geometrization to higher dimensions, a crucial step is finding the right set of canonical geometries in each dimension.  One candidate for these geometries are metrics that are the \textit{stable} fixed points (up to diffeomorphism and scaling) of the Ricci flow. Fixed points of the Ricci flow are known as \textit{Ricci solitons} and clearly include Einstein metrics. Roughly speaking, stability can be taken to mean that the Ricci flow starting at small perturbation of a Ricci soliton will return to the soliton.\\
\\
The study of the linear stability of Ricci solitons was initiated by Cao, Hamilton, and Ilmanen \cite{CHI} who considered the second variation of Perelman's $\nu$ entropy, a monotonic quantity for the flow. In the subsequent years, stability questions for many important classes of metric have been investigated such as: Einstein metrics admitting parallel spinors \cite{DWW}, compact symmetric spaces \cite{CH}, and K\"ahler metrics \cite{HMPAMS}.\\
\\
In this article we take up the study of stability for a class of compact Einstein metrics known as warped products. These are generalisations of ordinary Riemannian products where the underlying manifold $M$ decomposes as $M=B\times F$ for a base $B$ and a fibre $F$ but the metric on the fibre is `warped' by a factor coming from the base. We refer the reader to Section \ref{sec:2} for the precise definition of these metrics. There are many important examples of such Einstein metrics in low dimensions including the inhomogenous families on $\mathbb{S}^{3}\times \mathbb{S}^{2}, \ \mathbb{S}^{3}\times \mathbb{S}^{3}$,  and $\mathbb{S}^{4}\times \mathbb{S}^{2}$ due to B\"ohm \cite{Bo}, and the warped product Einstein metric on $\mathbb{CP}^{2}\sharp\overline{\mathbb{CP}}^{2}\times\mathbb{S}^{2}$ due to L\"u, Page and Pope \cite{LPP}.   We completely settle the question of stability in low dimensions:

\begin{theorem}\label{thmA}
Let $(M^{n},g)$ be a warped product Einstein manifold where $n\leq 6$. Then  $(M,g)$ is unstable as a fixed point of the Ricci flow.  	
\end{theorem}
We remark that the proof of Theorem \ref{thmA} also shows that warped products with three-dimensional base and a four-dimensional fibre are unstable too. However, this does not account for all possible seven-dimensional products.\\
\\ 
Many examples of Einstein warped products are constructed from  families of Riemannian manifolds $(B,\bar{g}_{i},f_{i},m_{i})$ where the metrics $\bar{g}_{i}$ (known as \textit{quasi-Einstein metrics}) converge in the $C^{\infty}$ topology as $i\rightarrow \infty$ to a Ricci soliton $(B,\bar{g}_{\infty},\phi)$.  For example, Case has demonstrated that the metrics of L\"u--Page--Pope have this property \cite{Case2}. For warped products coming from such a family, we prove an asymptotic instability result that shows that the Einstein metrics are also unstable if the dimension is large enough.
\begin{theorem}\label{thmB}
	Let $(B,\bar{g}_{i},f_{i},m_{i})$ be a sequence of quasi-Einstein metrics that converge in the $C^{\infty}$ topology to a non-trivial Ricci soliton $(B,g_{\infty},\phi)$.\\
	Then there exists a $K\in \mathbb{N}$ such that the associated warped product Einstein metrics $(M^{k},g)$ are unstable for $k\geq K$. 
\end{theorem}
As discussed in Section \ref{sec:2}, the construction of a warped product Einstein metric requires an Einstein metric $\tilde{g}$ on the fibre $F$. The following theorem makes precise the interaction between the stability of the fibre Einstein metric and that of the warped product.
\begin{theorem}\label{thmC}
	Let $(M,g)$ be a warped product Einstein manifold with fibre $(F,\tilde{g})$ satisfying ${\Ric(\tilde{g})=\mu \tilde{g}}$. Let $\sigma$ be a divergence-free, trace-free eigentensor of the Lichnerowicz Laplacian of $(F,\tilde{g})$ satisfying ${\widetilde{\Delta}_{L}\sigma=-\kappa\sigma}$. If  $\kappa < \mu$ then the warped product $(M,g)$ is unstable.
\end{theorem}
This theorem allows us to find a large class of Ricci flow unstable warped products. We will refer to warped products that are unstable in the manner of Theorem \ref{thmC} as fibre-unstable.
\begin{cor}\label{Cor1}
	The following fibres yield fibre-unstable warped products:
	\begin{itemize}
		\item when $(F,\tilde{g})$ is a Riemannian product $(F_{1}\times F_{2}, g_{1}\oplus g_{2})$, 
		\item when $(F,\tilde{g})$ is a K\"ahler-Einstein metric with $h^{(1,1)}>1$,
		\item when $(F,\tilde{g})$ is a fibre-unstable warped product Einstein metric.
	\end{itemize}	
\end{cor}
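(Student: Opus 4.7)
In each of the three cases the strategy is to produce a transverse, trace-free eigentensor $\sigma$ of the fibre Lichnerowicz Laplacian $\widetilde{\Delta}_L$ on $(F,\tilde g)$ satisfying $\widetilde{\Delta}_L\sigma=-k\sigma$ with $k<\mu$, and then apply Theorem~\ref{T2}.

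For the Riemannian product fibre $(F,\tilde g)=(F_1\times F_2, g_1\oplus g_2)$ the Einstein condition forces both factors to be Einstein with the same constant $\mu$. Take
\begin{equation*}
\sigma = n_2\, g_1\oplus(-n_1)\,g_2,\qquad n_i=\dim F_i.
\end{equation*}
This tensor is trace-free by construction and parallel, hence transverse. Because the Riemann tensor of a Riemannian product has no components with indices in both factors, the Weitzenb\"ock curvature term in $\widetilde{\Delta}_L\sigma$ collapses on each factor to the Ricci contraction, and one finds $\widetilde{\Delta}_L\sigma=0$. Thus $k=0<\mu$.

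For the K\"ahler-Einstein case, the hypothesis $\dim H^{(1,1)}(F;\RR)>1$ together with the orthogonal splitting $H^{(1,1)}=\RR[\omega]\oplus H^{(1,1)}_{\mathrm{prim}}$ yields a non-zero primitive harmonic $(1,1)$-form $\alpha$. Following Koiso, set
\begin{equation*}
\sigma(X,Y)=\alpha(JX,Y)+\alpha(JY,X).
\end{equation*}
Then $\sigma$ is symmetric and $J$-invariant; trace-freeness of $\sigma$ is equivalent to primitivity of $\alpha$, while harmonicity of $\alpha$ yields (via the K\"ahler identities) both the transverse condition and the eigenvalue identity $\widetilde{\Delta}_L\sigma=2\mu\sigma$. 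Hence $k=-2\mu<\mu$.

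The third case is a bootstrap: since $(F,\tilde g)$ is itself a fibre-unstable warped product, the construction inside the proof of Theorem~\ref{T2}, applied to the inner warped product structure of $(F,\tilde g)$, delivers a TT eigentensor of $\widetilde{\Delta}_L$ on $(F,\tilde g)$, which is then fed into Theorem~\ref{T2} applied to $(M,g)$. The main obstacle is precisely this bootstrap step: one must check that the eigenvalue of the tensor produced on $(F,\tilde g)$ still obeys $k<\mu$, even though the Einstein constants of the inner fibre and of $(F,\tilde g)$ are related in a non-trivial way through the inner warping function via the structure equations \eqref{KKeq} and \eqref{QEMeq}. This requires making the construction in the proof of Theorem~\ref{T2} explicit enough to track how the inner warping function contributes to the Lichnerowicz eigenvalue of the lifted TT tensor on the total space.
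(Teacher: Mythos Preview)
Your treatment of the first two bullets is essentially what the paper does: exhibit an explicit TT eigentensor on the fibre with $k<\mu$ and invoke Theorem~\ref{T2}. One correction: in the K\"ahler--Einstein case the eigenvalue produced by a primitive harmonic $(1,1)$-form is $0$, not $2\mu$; the correspondence $h\leftrightarrow h(J\cdot,\cdot)$ intertwines $\widetilde{\Delta}_L$ with the Hodge Laplacian on real $(1,1)$-forms, so harmonic $\alpha$ gives $\widetilde{\Delta}_L\sigma=0$. The paper simply cites this fact. Either way $k<\mu$, so the conclusion is unaffected.

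For the third bullet you have correctly located the only non-trivial point, but the resolution is much simpler than the ``explicit tracking of the inner warping function'' you anticipate. The key observation---which is the content of the paper's proof---is that the Rayleigh-quotient calculation inside the proof of Theorem~\ref{T2} actually yields a \emph{sharper} bound than $-2\lambda$. Multiplying the structure equation~\eqref{KKeq} by $f^{m-6}$ and integrating by parts gives the identity
\[
\int_B \tfrac{\mu}{2}\,f^{m-6}-2f^{m-6}|\overline{\nabla}f|^{2}\,dV_{\bar g}=\tfrac{\lambda}{2}\int_B f^{m-4}\,dV_{\bar g},
\]
and substituting this into the integral computed in the proof of Theorem~\ref{T2} collapses it to
\[
\int_M\langle \Delta_L h,h\rangle\,dV_g=(\mu-k)\,\|\sigma\|_{\tilde g}^{2}\int_B f^{m-6}\,dV_{\bar g}\;-\;\lambda\|h\|_g^{2}.
\]
Thus whenever $k<\mu$ one has the strict inequality $\int_M\langle \Delta_L h,h\rangle\,dV_g>-\lambda\|h\|_g^{2}$. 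Now apply this with the inner warped product $(F,\tilde g)$ playing the role of $(M,g)$ (so $\mu$ replaces $\lambda$ and the inner fibre constant $\mu'$ replaces $\mu$): the TT tensor $0\oplus\sigma'$ on $F$ has Rayleigh quotient strictly above $-\mu$, hence $\widetilde{\Delta}_L$ restricted to TT tensors on $F$ has an eigentensor with eigenvalue $-k$ satisfying $k<\mu$. Feeding this into Theorem~\ref{T2} for $(M,g)$ finishes the bootstrap. No further analysis of how the two warping functions interact is needed; the single integrated identity from~\eqref{KKeq} does all the work.
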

\subsection{The stability of generalised black holes}
A second motivation for studying stability, which historically preceeds the Ricci flow, comes from the role compact Einstein metrics play in the theory of generalised black holes. This was the context of the pioneering study of the stability of B\"ohm's Einstein metrics on low-dimensional products of spheres conducted by Gibbons, Hartnoll, and Pope \cite{GHP}. In their study they exploited the fact that the B\"ohm metrics are invariant under a cohomogeneity one action by a compact Lie group. We generalise their results in low dimensions to arbitrary warped product Einstein metrics without any symmetry assumptions.
\begin{theorem}\label{thmE}
Let $M$ be a warped product Einstein metric with three-dimensional base and two or three-dimensional fibre.  Then the associated  Schwarzschild--Tangherlini black hole is unstable. 
\end{theorem}
\subsection{Relation of Theorem \ref{thmE} to other works on stability}
The construction of Schwarzschild--Tangherlini black holes from Einstein metrics with positive Einstein constant can itself be viewed as a non-compact warped product.  There are many other methods that build geometrically interesting manifolds from positive Einstein metrics. For example, in \cite{HHS} a stability inequality for Ricci-flat cones over Einstein bases was investigated.  In this setting, as in Theorem \ref{thmE}, the stability of the cone is related to the spectral properties of the Lichnerowicz Laplacian of the base. The authors proved that the cones over product manifolds in dimension less than 10 are unstable (which is somewhat similar to the situation in Theorem \ref{thmE} in that the construction over low dimensional `products' is unstable).  In \cite{HHS} authors also discussed the conjectural link, due to Ilmanen, between unstable cones and the non-uniqueness of Ricci flows emerging from them. Kr\"oncke has made a more extensive study of such non-compact warped product constructions in \cite{KlKr3} and \cite{KlKr4}.\\
\\
The relation between the linear stability discussed in this article and the dynamic stability of the Ricci flow has been made precise by Sesum in \cite{Sesum}.  We refer the reader to Section \ref{LSfRF} for more details on this relationship.\\

\subsection{Conventions}
As the proofs of the theorems rest on certain quantities having a particular sign, it is important to state exactly the conventions used in the paper where there is room for ambiguity. On the product ${B\times F}$, uppercase letters denote general coordinates, lowercase Roman letters denote the coordinates on $B$ and lowercase Greek letters denote the coordinates on the fibre $F$. We will denote the associated coordinate vector fields by $\partial_{A}$, $\partial_{a}$ and $\partial_{\alpha}$. In order to try and keep coordinate calculations uncluttered, we will often abuse notation by writing $\nabla_{A}T$ for $\nabla_{\partial_{A}}T$ and $\nabla_{A}B$ for $\nabla_{\partial_{A}}\partial_{B}$.\\ 
\\
The divergence of a tensor $T$ is given by
$$\div(T)(\cdot) = g^{AB}(\nabla_{A}T)(\partial_{B},\cdot)$$
and the (connection) Laplacian $\Delta T$ is given by
$$\Delta T = -\nabla^{\ast}\nabla T = g^{AB}(\nabla^{2}_{A,B}T)=\div(\nabla T).$$
With this convention, the spectrum of the Laplacian is non-positive. The convention we use for curvature is 
$$R(X,Y,Z,W) = g(R(X,Y)Z,W) = g(\nabla^{2}_{Y,X}Z-\nabla^{2}_{X,Y}Z,W),$$
and the curvature operator $\Riem: s^{2}(TM)\rightarrow  s^{2}(TM)$ is given by
$$\Riem(h,\cdot)_{AB} = R_{ACBD}h^{CD},$$
for $h \in s^{2}(TM)$.\\
\\
Geometric objects on the base manifold $B$ will usually be denoted using a bar, for example $\bar{g}$ for the metric.  Geometric objects on the fibre $F$ are likewise denoted using a tilde, so we have $\tilde{g}$ for the fibre metric. We will set $n=\dim(B)$ and $m=\dim(F)$.\\
\\ 
\emph{Acknowledgements:} The initial work on this project was conducted whilst WB and TM were visiting SH in Dec 2015 and Jan 2016.  The visit of WB was funded by an LMS grant under scheme 4. TM was funded by a Cal. State Fullerton Startup Grant.   We would like to thank the referees for their careful reading and very useful comments.  
\section{Background} 
\label{sec:2}
\subsection{Warped Product Einstein metrics}
Let $M=B\times F$ be a product manifold. Equip $M$ with the metric
\begin{equation*}
g = \pi^{\ast}_{B}\overline{g} \oplus (f\circ\pi_{B})^{2}\pi^{\ast}_{F}\tilde{g},
\end{equation*}
where $\overline{g}$ and $\tilde{g}$ are Riemannian metrics on $B$ and $F$ respectively, ${\pi_{B}:M\rightarrow B}$ and ${\pi_{F}:M\rightarrow F}$ denote the natural projections, and $f\in C^{\infty}(B)$. The Riemannian manifold $(M,g)$ is referred to as a \emph{warped product}. We shall henceforth adopt the standard abuse of notation and 
drop the references to the projections $\pi_{B}$ and $\pi_{F}$. The manifold $B$ is referred to as the base and $F$ is referred to as the fibre. By taking the function $f$ to be constant, one recovers a Riemannian product.\\
\\
We will be concerned with the case where the warped product is an Einstein metric with positive scalar curvature. By Myers's theorem this immediately implies the manifolds $B$ and $F$ are compact. If a warped product $(M,g)$ is an Einstein manifold with ${\Ric(g)=\lambda g}$ for $\lambda>0$, then the following is well known (e.g. Corollary 9.107 in \cite{Bes}) :
\begin{equation*}
(F,\tilde{g}) \textrm{ is an Einstein manifold with } \Ric(\tilde{g})=\mu \tilde{g} \textrm{ for some } \mu>0,
\end{equation*}
\begin{equation} \label{KKeq}
f\overline{\Delta} f+(m-1)|\overline{\nabla}f|^{2}+\lambda f^{2} =\mu,
\end{equation}
\begin{equation} \label{QEMeq} 
\Ric(\overline{g})-mf^{-1}\overline{\nabla}^{2}f = \lambda \overline{g}.
\end{equation}

Riemannian manifolds $(B,\overline{g})$ that solve Equation (\ref{QEMeq}) for some ${f \in C^{\infty}(B)}$ and $m>0$ are known as \emph{quasi-Einstein} manifolds and are studied in their own right as interesting generalisations of Einstein metrics. A foundational result of Kim and Kim \cite{KK} says that if $(B,\overline{g},f,m)$ solve (\ref{QEMeq}), then there exists a $\mu>0$ such that $f$ solves Equation (\ref{KKeq}). Hence for integral values of $m \geq 2$, one can construct warped product Einstein manifolds from a quasi-Einstein metric on the base $B$.  \\
\\
We also consider Riemannian manifolds $(M,g)$ where the metric $g$ solves
\begin{equation}\label{GRSeq}
\Ric(g)+\nabla^{2}\phi=\lambda g,
\end{equation}
where $\phi \in C^{\infty}(M)$ and $\lambda \in \mathbb{R}$.  Metrics solving (\ref{GRSeq}) are called \emph{gradient Ricci solitons}. One can view Equation (\ref{GRSeq}) as the formal limit as $m\rightarrow \infty$ of (\ref{QEMeq}) by setting $\phi_{m}=-m\log f$. More detailed results on the sense in which gradient Ricci solitons are the limits of quasi-Einstein metrics can be found in the work of Case \cite{Case}.\\  
\\
At the time of writing, there are very few general methods for constructing of compact, warped product Einstein metrics. Much more is known about constructions of non-compact warped product Einstein metrics, see for example Chapter 9 of \cite{Bes} where there are examples of quasi-Einstein metrics on non-compact surfaces with the parameter $m\in (1,\infty)$.  As mentioned in Section \ref{sec:1}, the first compact examples that were found are due to B\"ohm \cite{Bo}  and occur on the product ${\mathbb{S}^{n}\times F^{m}}$ with $2\leq m\leq 6$ and ${3 \leq n \leq 9-m}$. The second family of examples come from a construction due to L\"u, Page and Pope \cite{LPP} of quasi-Einstein metrics on $\mathbb{CP}^{1}$-bundles over a Fano K\"ahler--Einstein base. This construction (and its generalisation due to the second author \cite{Halljgp}) produce quasi-Einstein metrics for all $m>1$ and hence infinitely many examples of warped product Einstein manifolds.  The lowest dimensional examples of the L\"u--Page--Pope construction occur when the base is the non-trivial $\mathbb{CP}^{1}$-bundle over $\mathbb{CP}^{1}$.  In this case one can view the base as ${B=\mathbb{CP}^{2}\sharp\overline{\mathbb{CP}}^{2}}$. A very explicit construction of the L\"u--Page--Pope metrics on this manifold was given in \cite{BHJM}. As mentioned already, Case \cite{Case2} showed that the L\"u--Page--Pope quasi-Einstein metrics converge as $m\rightarrow \infty$ to the Koiso--Cao K\"ahler--Ricci soliton constructed independently  in \cite{Cao} and \cite{Koi}. The metrics constructed in \cite{Halljgp} should converge to generalisations of the Koiso--Cao soliton due to Dancer and Wang \cite{DWC1} which are known as Dancer--Wang K\"ahler--Ricci solitons. 

\subsection{Linear stability for Ricci flow} \label{LSfRF}
Ricci solitons first arose as the fixed points up to gauge of the Ricci flow
$$\frac{\partial g}{\partial t} = -2\Ric(g).$$
In particular, Einstein metrics evolve via homothetic rescaling. Perelman \cite{Per1} introduced a quantity $\nu$ which is monotonically increasing along a Ricci flow and stationary only at shrinking Ricci solitons and, in particular, at Einstein metrics with positive Einstein constant $\lambda$. If the second variation of $\nu$ at an Einstein metric is positive, then a small perturbation of the metric will increase $\nu$ and the Ricci flow cannot flow back to the Einstein metric. Hence the Einstein metric will be unstable. The linear stability of Ricci solitons was considered by Cao, Hamilton and Ilmanen \cite{CHI},  Cao and Zhu \cite{CZ}, and the second and third authors \cite{HMPAMS}.  In order to state their theorem we need to introduce the operators 
$$\div_{\phi}(\cdot) := e^{-\phi}\div(e^{\phi}\cdot) = \div(\cdot)-\iota_{\nabla \phi}(\cdot),$$
and
$$\Delta_{\phi}(\cdot) := \Delta(\cdot)-\nabla_{\nabla \phi}(\cdot).$$
\begin{theorem2}[Cao--Hamilton--Ilmanen \cite{CHI}, Cao--Zhu \cite{CZ}]\label{CZstab}
	Let $(M,g,\phi)$ be a gradient Ricci soliton with potential function $\phi$ and constant $\lambda$. Let $h\in s^{2}(TM)$. Then
	$$
	\frac{d^{2}}{ds^{2}}\nu(g+sh) |_{s=0} = \frac{(2\lambda)^{-1}}{(8\pi\lambda)^{n/2}}\int_{M}\langle N(h),h\rangle e^{-\phi}dV_{g},	
	$$
	where
	\begin{equation}\label{Stabop}
	N(h) = \frac{1}{2}\Delta_{\phi}(h)+\Riem(h,\cdot)+\div^{\ast}\div_{\phi}h+\frac{1}{2}\nabla^{2}v_{h} -\left(\frac{\int_{M}\langle\Ric(g),h\rangle e^{-\phi}dV_{g}}{\int_{M}\mathrm{scal}(g)e^{-\phi}dV_{g}}\right)\Ric(g),
	\end{equation}
	$\mathrm{scal}(g)$ is the scalar curvature of $g$, and $v_{h}$ is the unique solution to
	$$ \Delta_{\phi}v_{h}+\lambda v_{h}=\div_{\phi}\div_{\phi}h. $$
	
\end{theorem2}
As Perelman's $\nu$-entropy is invariant under homothetic rescaling and diffeomorphisms of the metric, we restrict to perturbations given by tensors $h$ satisfying the gauge-fixing conditions
$$ \div_{\phi}(h) =0 \textrm{ and } \int_{M}\langle \Ric(g),h\rangle e^{-\phi}dV_{g}=0.$$
In the Einstein case (where the soliton potential $\phi$ is constant), the stability operator (\ref{Stabop}) restricted to these tensors is given by
\begin{equation*}\label{StabopEin}
N(h) = \frac{1}{2}(\Delta_{L}h+2\lambda h),
\end{equation*}
where $\Delta_{L}$ is the Lichnerowicz Laplacian
\begin{equation}\label{LLap}
\Delta_{L}h = \Delta h +2\Riem(h,\cdot)-\Ric\cdot h-h\cdot \Ric.
\end{equation}
Hence we can state a stability criterion for Einstein metrics in terms of the spectrum of the Lichnerowicz Laplacian.
\begin{definition}[Linear stability of Einstein metrics \cite{CHI}, \cite{CH}]
	Let $(M,g)$ be a compact Einstein manifold satisfying $\Ric(g) =\lambda g$ and let $-\kappa$ be the largest eigenvalue of the Lichnerowicz Laplacian restricted to the space of divergence-free, $g$-orthogonal tensors.
	\begin{enumerate}
		\item If $\kappa > 2\lambda$, $g$ is called \textit{linearly stable}. 
		\item If $\kappa =2\lambda$, $g$ is called \textit{neutrally linearly stable}.
		\item If $\kappa < 2\lambda$, $g$ is called \textit{linearly unstable}.   
	\end{enumerate}
	
\end{definition}
Sesum has related the notions of linear stability and the dynamical stability of the Ricci flow \cite{Sesum}. In particular, an Einstein metric $g_{0}$ is dynamically stable if there exists a $C^{k}$-neighbourhood ($k\geq 3$) $U$ of $g_{0}$ such that the $\lambda$-rescaled Ricci flows
$$\dfrac{\partial g}{\partial t}=-2\Ric(g) +2\lambda g $$
converge to $g_{0}$ for all initial metrics $g \in U$. If no such neighbourhood exists then we say $g_{0}$ is unstable.  This article is concerned with unstable metrics and we note the following result of Sesum.
\begin{proposition}[\cite{Sesum}]
	Let $(M,g)$ be a compact Einstein manifold satisfying ${\Ric(g) =\lambda g}$. If the Lichnerowicz Laplacian has a divergence-free, $g$-orthogonal eigentensor with eigenvalue $-\kappa$ satisfying
	\begin{equation*}
	\kappa < 2\lambda,
	\end{equation*} 
	then $g$ is unstable as a fixed point of the Ricci flow.
\end{proposition}
It is expected that Einstein metrics which are stable under the Ricci flow are quite special. In dimension four Richard Hamilton has conjectured that the only linearly stable examples of positively curved Einstein metrics (or Ricci solitons) are $\mathbb{S}^{4}$ and $\mathbb{CP}^{2}$ with their standard metrics. The Fubini--Study on $\mathbb{CP}^{n}$ metric is neutrally linearly stable as the eigentensors of the Lichnerowicz Laplacian achieve the bound $-2\lambda$. Recent work by Kr\"oncke \cite{KlKr1} showed the surprising result that the Fubini--Study metric on $\mathbb{CP}^{n}$ is not dynamically stable. Various works \cite{DWW}, \cite{HHS}, \cite{HMPAMS}, \cite{HMAGAG1}, \cite{HMPEMS} have provided case-by-case evidence for Hamilton's conjecture but as yet, very little general theory exists. Recent work by Pali has addressed this in the case of K\"ahler--Ricci solitons and K\"ahler--Ricci flow \cite{Pali2}, \cite{Pali1} . By looking at compact symmetric spaces, Cao and He showed that there do exist a wider variety of stable Einstein metrics in higher dimensions \cite{CH}. 
\\
In the case that function $f$ in Equations (\ref{KKeq}) and (\ref{QEMeq}) is constant, we recover the usual notion of a product Einstein metric.  It is well-known that ordinary products can be destabilised by `inflating' one of the factors. The destabilising perturbations we use to prove the main theorems follow a similar idea but of course the presence of the non-constant warping factor $f$ complicates this procedure.\\   
\\
We end this section with a lemma that will prove useful in subsequent calculations.
\begin{lemma}\label{Lem1}
	Let $(M^{n},g)$ be an Einstein manifold with Einstein constant $\lambda>0$ and let $h$ be a divergence-free tensor. Then: 
	\begin{enumerate}
		\item The tensor $h+cg$
		satisfies 
		$$\int_{M}\langle h+cg,g\rangle \ d V_{g} = 0,$$
		where
		$$c = \frac{-\int_{M}\tr(h)dV_{g}}{n\vol(M)}.$$
		\item The stability integral for $h+cg$ is given by
		$$\langle N(h+cg),h+cg\rangle_{L^{2}(g)} = \langle\frac{1}{2}\Delta h +\Riem(h,\cdot),h\rangle_{L^{2}(g)}-\lambda\frac{\left(\int_{M}\tr(h) dV_{g} \right)^{2}}{n\vol(M)}.$$
	\end{enumerate}
	
\end{lemma}
\begin{proof}
	(1) is a trivial calculation. To see  (2) note that by Equation  \ref{Stabop}, the stability operator $N$ is given by
	$$N(h+cg)=\frac{1}{2}\Delta(h+cg)+\Riem(h+cg,\cdot).$$
	Hence we see
	$$\langle N(h+cg),h+cg\rangle_{L^{2}(g)} = \langle\frac{1}{2}\Delta (h+cg) +\Riem(h+cg,\cdot),h+cg\rangle_{L^{2}(g)}.$$
	Using the fact that $\Delta(cg)=0$ and, as $g$ is an Einstein metric, 
	$$ \langle\Riem(h+cg,\cdot),cg\rangle_{L^{2}(g)} = c\lambda \int_{M}\tr(h+cg)dV_{g}=0,$$
	we obtain
	$$\langle N(h+cg),h+cg\rangle_{L^{2}(g)} = \langle\frac{1}{2}\Delta h+\Riem(h,\cdot),h\rangle_{L^{2}(g)}+ \langle \Riem(cg,\cdot),h\rangle_{L^{2}(g)}.$$
	The claim follows by noting $\Riem(cg,\cdot) = c\Ric(g)=c\lambda g$ and the value of $c$ from part (1).
\end{proof}

\subsection{Black hole stability}
In \cite{GH} the authors developed the stability theory of generalised Schwarzschild--Tangherlini black holes. These are metrics of the form
\begin{equation*} 
d\hat{s}^{2} = -\left[1-\left(\frac{l}{r}\right)^{d-1}\right]dt^{2}+\frac{dr^{2}}{\left[ 1-\left(\frac{l}{r}\right)^{d-1}\right]} + r^{2}ds^{2}_{d},
\end{equation*}
where $l$ is a constant and $ds^{2}_{d}$ is the metric on a $d$ dimensional compact Einstein manifold normalised so that its Einstein constant is $d-1$. They found a stability criterion involving the spectrum of the Lichnerowicz Laplacian restricted to the divergence-free (called transverse in the physics literature), trace-free tensors on the Einstein manifold. We state their stability criterion with respect to our convention that the ordinary Laplacian has a non-positive spectrum (this is the opposite convention to that taken in \cite{GH}).
\begin{proposition}[Black hole linear stability \cite{GH}]\label{BHstab}
	Let $(M^{n},g)$ be a compact Einstein manifold satisfying $\Ric(g) =\lambda g$. Then the associated Schwarzschild--Tangherlini black hole is linearly unstable if the Lichnerowicz Laplacian has a divergence-free, trace-free eigentensor with eigenvalue $-\kappa$  satisfying
	\begin{equation*}
	\kappa < \frac{\lambda}{n-1}\left(4-\frac{(5-n)^{2}}{4}\right)=\frac{(9-n)\lambda}{4}.
	\end{equation*}
\end{proposition} 
Note that if an Einstein metric is unstable in the black hole sense, then  it is unstable as a fixed point of the Ricci flow. We also note that we require genuinely trace-free perturbations in this definition rather than perturbations which are $L^{2}$-orthogonal to the metric (i.e. the integral of the trace of the perturbation is zero).\\
\\
In \cite{GHP}, Gibbons, Hartnoll and Pope investigated the linear stability of the B\"ohm warped product metrics.  They proved that the  B\"ohm metrics (or rather the associated  Schwarzschild--Tangherlini black holes) on $\mathbb{S}^{3}\times \mathbb{S}^{m}$ for $m=2,3$ are unstable. Their proof used the cohomogeneity one symmetry that the B\"ohm metrics exhibit. Our Theorem \ref{thmE} is a generalisation of this result to an arbitrary warped product metric on these spaces. 
\subsection{A  heuristic for destabilising one parameter families of warped products}
The methods for proving the Theorems \ref{thmA} and \ref{thmB} are inspired by considering how to destabilise a product gradient Ricci soliton. Let    $(B,\bar{g},\phi)$ be a gradient Ricci soliton satisfying
$$\Ric(\bar{g})+\overline{\nabla}^{2}\phi=\lambda \bar{g},$$
and  let  $(F,\tilde{g})$ be an Einstein manifold satisfying
$$\Ric(\tilde{g}) = \lambda\tilde{g},$$
where $\lambda>0$. Then the metric ${g=\bar{g}\oplus \tilde{g}}$ is a gradient Ricci soliton on $M=B\times F$ with potential function $\phi \circ\pi_{B}$ (as usual, we will drop the reference to the projection from now on and also denote this function $\phi$).\\
\\
As mentioned previously, it is natural to use gauge-fixed tensors to destabilise. For a Ricci soliton, this means choosing tensors which are $\div_{\phi}$-free and $L^{2}$-orthogonal, with respect to the weighted volume form $e^{-\phi}dV_{\bar{g}}$, to the Ricci tensor. On a product soliton there are two natural tensors satisfying the gauge-fixing conditions. The first is the tensor
$$h_{1}=e^{\phi}\left(\frac{\bar{g}}{n}\oplus -\frac{1}{m}\tilde{g}\right).$$ 
The second is the tensor
$$ h_{2} = \Ric(\bar{g})\oplus c\tilde{g},$$
with the constant $c$ chosen so that
$$\int_{M}\langle \Ric(g),h_{2}\rangle e^{-\phi}dV_{\bar{g}}=0.$$ 
One can compute the stability integral in Theorem \ref{CZstab} for each of the perturbations. 
\begin{proposition}\label{RDP1}
	Let $(B\times F,g=\bar{g}\oplus \tilde{g},\phi)$ be a gradient product Ricci soliton and let $h_{1}$ and $h_{2}$ be as above. Then
	\begin{align*}
	\langle N(h_{1}),h_{1}\rangle_{L^{2}(e^{-\phi}dV_{g})} =  \vol (F)\int_{B}\left(-\left(\frac{1}{2n}+\frac{1}{2m} -\frac{1}{n^{2}}\right)|\overline{\nabla} e^{\phi}|^{2} +\left(\frac{1}{n}+\frac{1}{m}\right)\lambda e^{2\phi}\right)e^{-\phi}dV_{\bar{g}}, 
	\end{align*}
	and
	$$\langle N(h_{2}),h_{2}\rangle_{L^{2}(e^{-\phi}dV_{g})}  = \lambda \|h_{2}\|_{L^{2}(e^{-\phi}dV_{g})}^{2}.$$
\end{proposition}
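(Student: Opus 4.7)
The plan is to handle $h_{1}$ and $h_{2}$ separately. For each perturbation I would first verify the gauge-fixing conditions $\div_{\phi}h=0$ and $\int_{M}\langle\Ric(g),h\rangle e^{-\phi_{\infty}}\,dV_{g}=0$, so that the stability operator reduces to $N(h)=\tfrac{1}{2}\Delta_{\phi}h+\Riem(h,\cdot)$ (the auxiliary function $v_{h}$ then satisfies $\Delta_{\phi}v_{h}+\lambda v_{h}=0$ and vanishes identically under the standard spectral assumption for shrinking solitons). For $h_{1}$, I would then rewrite it in the more compact form $h_{1}=uH$ with $u=e^{\phi_{\infty}}/(mn)$ and $H=m\bar{g}_{\infty}-n\hat{g}$, where $\hat{g}=f_{\infty}^{2}\tilde{g}$. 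The tensor $H$ is trace-free and parallel with respect to the product Levi-Civita connection, so $\nabla h_{1}=du\otimes H$ immediately gives $\div h_{1}=m\,du$ and, via the identity $du=u\,d\phi_{\infty}$, $\div_{\phi}h_{1}=0$; the orthogonality to $\Ric(g)$ follows from a direct computation after invoking the trace $R(\bar{g}_{\infty})=n\lambda-\overline{\Delta}\phi_{\infty}$ of the soliton equation, the resulting integrand being a total divergence.

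To evaluate the stability integral for $h_{1}$, weighted self-adjointness of $\Delta_{\phi}$ together with the identity $|H|^{2}=mn(m+n)$ yields $\int\langle\Delta_{\phi}h_{1},h_{1}\rangle e^{-\phi_{\infty}}\,dV_{g}=-(1/n+1/m)\vol(F)\int_{B}|\overline{\nabla}e^{\phi_{\infty}}|^{2}e^{-\phi_{\infty}}\,dV_{\bar{g}_{\infty}}$, while the block-diagonal nature of the Riemann tensor on a product forces $\Riem(H,\cdot)$ to split as the pair $(m\Ric(\bar{g}_{\infty}),-n\lambda\hat{g})$ (using the paper's convention $R_{ACBD}g^{CD}=\Ric_{AB}$ and the Einstein condition on the fibre). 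Substituting $R(\bar{g}_{\infty})=n\lambda-\overline{\Delta}\phi_{\infty}$ and carrying out one integration by parts via $\int_{B}e^{\phi_{\infty}}\overline{\Delta}\phi_{\infty}\,dV_{\bar{g}_{\infty}}=-\int_{B}e^{-\phi_{\infty}}|\overline{\nabla}e^{\phi_{\infty}}|^{2}\,dV_{\bar{g}_{\infty}}$ then assembles the claimed coefficients. For $h_{2}$, the constant $c$ is fixed by the weighted orthogonality condition, and $\div_{\phi}h_{2}=0$ combines the standard soliton identity $\div_{\phi}\Ric(g)=0$ (the contracted second Bianchi identity applied to the soliton equation) with the parallelism of $\hat{g}$. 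From this point the argument reproduces that of Cao and Zhu \cite{CZ}: on a shrinking gradient Ricci soliton the Ricci tensor is an eigentensor of $N$ with eigenvalue $\lambda$, and combining this with the analogous fibre statement for the $c\hat{g}$ component gives $N(h_{2})=\lambda h_{2}$, whence $\langle N(h_{2}),h_{2}\rangle=\lambda\|h_{2}\|^{2}$.

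I expect the main obstacle to be careful sign-tracking in the $h_{1}$ computation, since three distinct sign conventions (for the Riemann tensor, the drift Laplacian, and the gradient soliton equation) must be kept simultaneously aligned with those fixed in the paper's introduction; the curvature contribution in particular mixes positive and negative pieces from the two factors, and the integration by parts on the $e^{\phi_{\infty}}\overline{\Delta}\phi_{\infty}$ term converts it back into a gradient-squared integrand whose sign must be carefully balanced against the Laplacian piece. For $h_{2}$ the main subtlety is isolating the precise eigentensor statement from \cite{CZ} and verifying that the fibre perturbation $c\hat{g}$ does not disturb it, which ultimately reduces to the fact that $\hat{g}$ is itself Einstein and hence eigen with eigenvalue $\lambda$ under the fibre analogue of the stability operator.
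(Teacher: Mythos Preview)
The paper does not supply a detailed proof of this proposition; it is introduced with ``One can compute the stability integral in Theorem~\ref{CZstab} for each of the perturbations (the $h_{2}$ calculation uses the work of Cao and Zhu in \cite{CZ})'' and then simply stated. Your outline is a correct and natural way to carry out that computation: the rewriting $h_{1}=uH$ with $H$ parallel and trace-free is exactly the simplification that makes the gauge-checks and the Laplacian term transparent, the substitution $\bar S=n\lambda-\overline{\Delta}\phi_{\infty}$ followed by integration by parts is the right mechanism for producing the $1/n^{2}$ gradient term, and your appeal to \cite{CZ} for the $h_{2}$ eigentensor identity matches the paper's own attribution.
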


\begin{proof}
As $h_1$ satisfies the gauge-fixing conditions ${\div_{\phi}(h_{1})=0}$ and ${\langle \Ric(g),h_{1}\rangle_{L^{2}(e^{-\phi}dV_{g})}=0}$, equation (\ref{Stabop}) reduces to
$$N(h_{1}) = \frac{1}{2}\Delta_{\phi}h_{1}+\Riem(h_{1},\cdot).$$
As we are working with the rescaled volume form $e^{-\phi}dV_{g}$ we can integrate the first term by parts and get
$$\frac{1}{2}\langle\Delta_{\phi}h_{1},h_{1}\rangle_{L^{2}(e^{-\phi}dV_{g})} = \int_{M}-\frac{1}{2}|\nabla h_{1}|^{2}e^{-\phi}dV_{g}.$$
We then compute
$$|\nabla h_{1}|^{2} = |\overline{\nabla}e^{\phi}|^{2}|\frac{\bar{g}}{n}\oplus\frac{-\tilde{g}}{m}|^{2} =  |\overline{\nabla}e^{\phi}|^{2}\left(\frac{1}{n}+\frac{1}{m} \right).$$	
Hence
$$\frac{1}{2}\langle\Delta_{\phi}h_{1},h_{1}\rangle_{L^{2}(e^{-\phi}dV_{g})} = \vol(F)\int_{B}-|\overline{\nabla}e^{\phi}|^{2}\left(\frac{1}{2n}+\frac{1}{2m} \right)e^{-\phi}dV_{\bar{g}}.$$
The curvature operator term is given by
$$\Riem(h_{1},\cdot) = e^{\phi}\left(\frac{\Ric(\bar{g})}{n}\oplus -\lambda\frac{\tilde{g}}{m}\right),$$
thus, using the fact $\bar{g}$ is a gradient Ricci soliton,
$$\langle\Riem(h_{1},\cdot),h_{1}\rangle = \left(-\frac{\overline{\Delta}\phi}{n^{2}} +\lambda\left(\frac{1}{n}+\frac{1}{m}\right)\right)e^{2\phi}.$$
We note the identity $|\overline{\nabla}\phi|^{2}e^{\phi} =|\overline{\nabla}e^{\phi}|^{2}e^{-\phi} $ and so integrating by parts yields
$$\langle\Riem(h_{1},\cdot),h_{1}\rangle_{L^{2}(e^{-\phi}dV_{g})} =\vol(F) \int_{B}\left(\frac{|\overline{\nabla}e^{\phi}|^{2}}{n^{2}}+\lambda e^{2\phi}\left(\frac{1}{n}+\frac{1}{m}\right)\right)e^{-\phi}dV_{\bar{g}}.$$ 
Combining the Laplacian and curvature operator terms yields the identity for the $h_{1}$ tensor.\\
\\
For the variation $h_{2}$ we note that the results of Cao and Zhu \cite{CZ} show that $h_{2}$ is ${\div_{\phi}\mathrm{-free}}$ and of course, from the choice of $c$, is $L^{2}(e^{-\phi}dV_{g})$-orthogonal to $\Ric(g)$. Hence we compute as with the tensor $h_{1}$,
$$\frac{1}{2}\Delta_{\phi}h_{2}+\Riem(h_{2},\cdot) = \left(\frac{1}{2}\overline{\Delta}_{\phi}(\Ric(\bar{g}))+\overline{\Riem}(\Ric(\bar{g}),\cdot)\right)\oplus c\left(\frac{1}{2}\widetilde{\Delta}(\tilde{g})+\widetilde{\Riem}(\tilde{g},\cdot)) \right).$$
We again use the results of Cao and Zhu \cite{CZ} who show that (as $\bar{g}$ is a gradient Ricci soliton) 
$$\frac{1}{2}\overline{\Delta}_{\phi}(\Ric(\bar{g}))+\overline{\Riem}(\Ric(\bar{g}),\cdot) = \lambda \Ric(\bar{g}).$$ 
Finally, as $\tilde{g}$ is Einstein,  
$$\frac{1}{2}\widetilde{\Delta}(\tilde{g})+\widetilde{\Riem}(\tilde{g},\cdot)) = \lambda c \tilde{g}, $$
and we see $N(h_{2}) = \lambda h_{2}$. The result follows.
\end{proof}

While it is clear from Proposition \ref{RDP1} that the perturbation $h_{2}$ always destabilises, it is not clear that this is true for $h_{1}$. Note that
$$\int_{B}|\overline{\nabla}e^{\phi}|^{2}e^{-\phi}dV_{\bar{g}} = \frac{1}{2}\int_{B}\langle \overline{\nabla} \phi, \overline{\nabla} e^{2\phi}\rangle e^{-\phi}dV_{\bar{g}}.$$
It is well-known \cite{CaoRPRS} that it is also possible to normalise $\phi$ so that $$\overline{\Delta}_{\phi}\phi=-2\lambda \phi.$$
In this case
$$\int_{B}|\overline{\nabla}e^{\phi}|^{2}e^{-\phi}dV_{\bar{g}}  = \lambda\int_{B}\phi e^{\phi}dV_{\bar{g}},$$
and the perturbation $h_{1}$ is destabilising if
\begin{equation*}\label{GHPRSineq}
\int_{B} \phi e^{\phi}dV_{\bar{g}}<\frac{2n(m+n)}{n^2 + mn - 2m}\int_{B}e^{\phi}dV_{\bar{g}}.
\end{equation*}
Such an inequality does not in general hold for functions $\psi$ satisfying
$$\int_{B}\psi e^{-\psi}dV_{\bar{g}}=0,$$
and so  it is not possible to conclude that $h_{1}$ is a destabilising perturbation of a product Ricci soliton.\\
\\
Suppose that there is a sequence of quasi-Einstein metrics $(M,\bar{g}_{i},f_{i},m_{i})$ solving Equations (\ref{QEMeq}) and (\ref{KKeq}) for some fixed $\lambda$ and $\mu_{i}$ (we are always free to fix $\lambda$ by rescaling the $g_{i}$). Setting $\phi_{i} = -m_{i}\log f_{i}$, Equation (\ref{QEMeq}) becomes
\begin{equation*}
\Ric(\bar{g}_{i})+\overline{\nabla}^{2}\phi_{i}-\frac{d\phi_{i}\otimes d\phi_{i}}{m_{i}}=\lambda \bar{g}_{i}.
\end{equation*}
If as $i\rightarrow \infty$, $m_{i}\rightarrow \infty$, the metrics $\bar{g}_{i}$ and the functions $\phi_{i}$ converge (as we  are only outlining some heuristic reasoning we do not make this notion of convergence precise), then the limiting metric $\bar{g}_{\infty}$ and function $\varphi$ solve the gradient Ricci soliton equation
\begin{equation*}
\Ric(\bar{g}_{\infty})+\overline{\nabla}^{2}\varphi=\lambda \bar{g}_{\infty}.
\end{equation*} 
For large values of $m_{i}$, we can make the following approximations
$$
\bar{g}_{i}\approx \bar{g}_{\infty}, \quad f_{i} \approx 1,  \textrm{ and }  f_{i}^{m_{i}}\approx e^{-\varphi}.
$$ 
 Hence the warped product Einstein metric  $\bar{g}_{i}\oplus f_{i}^{2}\tilde{g}$ can be approximated by the metric $\bar{g}_{\infty}\oplus \tilde{g}$. We shall see in Section \ref{sec:5} that $\mu_{i} \approx \lambda$, and so, for large values of $m_{i}$, the metric $\bar{g}_{i}\oplus \tilde{g}$ is almost a product Ricci soliton.\\
\\
In Sections \ref{sec:4} and \ref{sec:5}  we define two different tensors. The first are the GHP variations (Definition \ref{GHPvardef}) which are the analogues of the tensor $h_{1}$. The fact that $h_{1}$ is not obviously universally destabilising goes some way to explain why using GHP variations fails to destabilise warped products in all but the lowest dimensions. The Ricci variation (Definition \ref{Ricvardef}) is the analogue of $h_{2}$ and Theorem \ref{thmB} could be paraphrased as saying that, providing the metrics $\bar{g}_{i}$ and $\bar{g}$ are close to each other, the fact that $h_{2}$ is universally destabilising means the Ricci variation also destabilises the warped product for large values of $i$. 
\section{Geometric operators for warped products}
\label{sec:3}
In this section we collect some useful identities that are used in the proof of the main theorems. All of the theorems involve choosing a potentially destabilising tensor ${h \in s^{2}(TM)}$ and then computing the Rayleigh quotient
\begin{equation*}
\frac{\int_{M}\langle \Delta_{L}h,h\rangle dV_{g}}{\int_{M}|h|^{2}dV_{g}},
\end{equation*}
which provides a lower bound for the least negative eigentensor of $\Delta_{L}$. The class of destabilising tensors that we consider can be written in the form
\begin{equation}\label{wptens}
h = \overline{h}\oplus \psi \tilde{h},
\end{equation}
where ${\overline{h} \in s^{2}(TB)}$, ${\tilde{h} \in s^{2}(TF)}$ and ${\psi \in C^{\infty}(B)}$.\\
\\
One fundamental calculation is of the Christoffel symbols for the Levi-Civita connection of a warped product metric.
\begin{lemma}[Christoffel symbols of $g$]\label{WPcon}
	Let $M = B\times F$ be a product manifold and let $g = \overline{g}\oplus f^{2}\tilde{g}$ be a warped product metric on $M$. Then the Christoffel symbols for the Levi-Civita connection of $g$ are given by:
	\begin{align*}
	\Gamma_{ab}^{c} &= \overline{\Gamma}_{ab}^{c},\\
	\Gamma_{\alpha \beta}^{c} &= -\tilde{g}_{\alpha \beta}f\overline{g}^{cd}(\overline{\nabla}_{d}f),\\
	\Gamma_{a\beta}^{\gamma} & = (\overline{\nabla}_{a}\log f)\delta^{\gamma}_{\beta},\\
	\Gamma_{\alpha\beta}^{\gamma} & = \tilde{\Gamma}_{\alpha\beta}^{\gamma}.
	\end{align*}
	All other symbols are zero.
\end{lemma}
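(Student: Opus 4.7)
The plan is to use the coordinate formula for Christoffel symbols,
\[
\Gamma_{AB}^{C} = \tfrac{1}{2}g^{CD}\bigl(\partial_{A}g_{BD} + \partial_{B}g_{AD} - \partial_{D}g_{AB}\bigr),
\]
applied to the warped product whose components in an adapted product chart are $g_{ab}=\overline{g}_{ab}$, $g_{\alpha\beta}=f^{2}\tilde{g}_{\alpha\beta}$ and $g_{a\beta}=0$, with block-diagonal inverse $g^{ab}=\overline{g}^{ab}$, $g^{\alpha\beta}=f^{-2}\tilde{g}^{\alpha\beta}$, $g^{a\beta}=0$. Three elementary facts then drive the whole argument: $\overline{g}_{ab}$ is independent of the fibre coordinates, $\tilde{g}_{\alpha\beta}$ is independent of the base coordinates, and $f$ depends only on the base.

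Next I would dispatch the four stated identities in turn. For $\Gamma_{ab}^{c}$ the contracted index $D$ must be a base index (since $g^{c\gamma}=0$), and the expression reduces verbatim to the Koszul formula for $\overline{\Gamma}_{ab}^{c}$. For $\Gamma_{\alpha\beta}^{c}$ the first two terms in the parentheses vanish because $g_{\beta d}=0$, leaving
\[
\Gamma_{\alpha\beta}^{c} = -\tfrac{1}{2}\overline{g}^{cd}\partial_{d}(f^{2}\tilde{g}_{\alpha\beta}) = -f\,\tilde{g}_{\alpha\beta}\,\overline{g}^{cd}\overline{\nabla}_{d}f,
\]
using that $\tilde{g}_{\alpha\beta}$ is constant along $B$. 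For $\Gamma_{a\beta}^{\gamma}$ only $\partial_{a}g_{\beta\delta}=2f(\partial_{a}f)\tilde{g}_{\beta\delta}$ survives, which contracts with $g^{\gamma\delta}=f^{-2}\tilde{g}^{\gamma\delta}$ to give $f^{-1}(\partial_{a}f)\delta^{\gamma}_{\beta}=(\overline{\nabla}_{a}\log f)\delta^{\gamma}_{\beta}$. For $\Gamma_{\alpha\beta}^{\gamma}$ the factor $f^{2}$ in the metric cancels the $f^{-2}$ in its inverse, and what remains is precisely the Koszul expression for $\tilde{\Gamma}_{\alpha\beta}^{\gamma}$.

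Finally I would verify that all mixed symbols vanish. For $\Gamma_{ab}^{\gamma}$ the inverse metric forces $D=\delta$ to be a fibre index, whereupon every summand in parentheses is a fibre derivative of a base tensor and so vanishes. For $\Gamma_{a\beta}^{c}$ the inverse forces $D=d$ to be a base index, and each of the three summands involves either a component of $g$ that is identically zero or a fibre derivative of a base quantity. The computation is entirely mechanical; the only genuine obstacle is bookkeeping, and the block decomposition together with the split dependence of each tensor on its respective factor handles it cleanly.
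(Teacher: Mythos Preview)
Your proposal is correct and complete; it is exactly the standard coordinate computation one would carry out for this lemma. The paper itself omits the proof as ``entirely straightforward,'' so there is nothing to compare against beyond noting that your argument is the canonical one.
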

It will also be useful to know an explicit form of the curvature tensor.
\begin{lemma}[Curvature tensor of $g$]\label{CurvTenWP}
	Let $M = B\times F$ be a product manifold and let $g = \overline{g}\oplus f^{2}\tilde{g}$ be a warped product metric on $M$. Then the curvature tensor for $g$ can be described by:
	\begin{align*}
	R_{abcd} & = \overline{R}_{abcd},\\
	R_{a\beta\gamma d} & =f\tilde{g}_{\beta\gamma}(\overline{\nabla}^{2}f)_{ad},\\
	R_{\alpha\beta\gamma\delta} & =f^{2}\widetilde{\Riem}_{\alpha\beta\gamma\delta}-f^{2}|\overline{\nabla}f|^{2}(\tilde{g}_{\alpha\gamma}\tilde{g}_{\beta\delta}-\tilde{g}_{\alpha\delta}\tilde{g}_{\beta\gamma}) .
	\end{align*}
	All other components are zero.
\end{lemma}
	 

As mentioned in Section \ref{sec:2}, one need only check stability on divergence-free tensors. The next lemma computes the divergence of tensors of the form (\ref{wptens}).
\begin{lemma}[Divergence of $h$]\label{WPdiv}
	Let $(B\times F^{m},\overline{g}\oplus f^{2}\tilde{g})$ be a warped product manifold and let $h$ be of the form \textrm{(\ref{wptens})}.  Then 
	\begin{align*}
	\div(h)(\cdot) = & \overline{\div}(\overline{h})(\cdot)+m\overline{h}(\overline{\nabla} \log f, \cdot)\\
	&-f^{-2}\psi (\tilde{\tr}(\tilde{h}))d \log f(\cdot)+f^{-2}\psi \widetilde{\div}(\tilde{h})(\cdot).
	\end{align*}
\end{lemma}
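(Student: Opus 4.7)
The approach is a direct computation in local product coordinates, exploiting the block structure of the warped product to collapse a long sum to only the surviving terms. Starting from the coordinate expression
$\div(h)(\partial_C) = g^{AB}\bigl(\partial_A h_{BC} - \Gamma^{D}_{AB} h_{DC} - \Gamma^{D}_{AC} h_{BD}\bigr),$
I would use that $g^{-1}$ is block diagonal with $g^{ab}=\overline{g}^{ab}$ and $g^{\alpha\beta}=f^{-2}\tilde{g}^{\alpha\beta}$, and that the ansatz $h=\overline{h}\oplus\psi\tilde{h}$ has vanishing mixed components $h_{a\beta}=0$. These two facts alone kill most of the candidate terms. I would then treat separately the two cases determined by whether the free index $C$ lies in the base or in the fibre.

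When $C=c$ is a base index, the contribution from $g^{ab}$ involves only Christoffel symbols of type $\Gamma^{d}_{ab}=\overline{\Gamma}^{d}_{ab}$, so this piece collapses directly to $\overline{\div}(\overline{h})_c$. The contribution from $g^{\alpha\beta}$ has two surviving pieces. The first uses $\Gamma^{d}_{\alpha\beta}=-f\tilde{g}_{\alpha\beta}\overline{g}^{de}(\overline{\nabla}_e f)$ contracted with $\overline{h}_{dc}$; after the trace identity $\tilde{g}^{\alpha\beta}\tilde{g}_{\alpha\beta}=m$ this produces $m\,\overline{h}(\overline{\nabla}\log f,\partial_c)$. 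The second uses $\Gamma^{\delta}_{\alpha c}=(\overline{\nabla}_c\log f)\delta^{\delta}_{\alpha}$ contracted with $\psi\tilde{h}_{\beta\delta}$, which yields $-f^{-2}\psi(\tilde{\tr}\tilde{h})\,d\log f(\partial_c)$ after the $\tilde g^{\alpha\beta}$ trace.

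When $C=\gamma$ is a fibre index, the $g^{ab}$ piece vanishes entirely: $h_{b\gamma}$ and $\Gamma^{\delta}_{ab}$ are zero, and the only potentially surviving cross-symbol $\Gamma^{\delta}_{a\gamma}$ contracts against the zero component $h_{b\delta}$. For the $g^{\alpha\beta}$ piece, the independence of $\psi$ from the fibre coordinates gives $\partial_\alpha h_{\beta\gamma}=\psi\,\partial_\alpha\tilde{h}_{\beta\gamma}$, and the only Christoffel symbols that contribute are the intrinsic $\tilde{\Gamma}^{\delta}_{\alpha\beta}$ and $\tilde{\Gamma}^{\delta}_{\alpha\gamma}$; all other symbols either vanish by Lemma~\ref{WPcon} or contract against zero components of $h$. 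What remains reassembles precisely as $f^{-2}\psi\,\widetilde{\div}(\tilde{h})_\gamma$.

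The main obstacle is nothing conceptual: it is purely bookkeeping. The task is to record carefully which of the many candidate terms in the raw coordinate expansion are annihilated by the block structure of $g^{-1}$ and $h$, and to identify the single nontrivial contraction $\tilde{g}^{\alpha\beta}\tilde{g}_{\alpha\beta}=m$ that is responsible for the appearance of the fibre dimension in the second term of the formula. Assembling the four resulting pieces and observing that the base contributions vanish on fibre vectors and vice versa (so that the $\cdot$ slot can be taken to be any tangent vector) yields the stated identity.
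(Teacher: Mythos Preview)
Your proposal is correct and is precisely the straightforward coordinate computation the paper has in mind; indeed the paper explicitly omits the proofs of the lemmas in Section~\ref{sec:3} as routine, and your case split on the free index together with the systematic use of Lemma~\ref{WPcon} and the block structure of $g^{-1}$ and $h$ is exactly the intended argument.
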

\begin{proof}
	In coordinates, as $g^{a\beta}=0$, we have 
	$$\div(h)_{C} = g^{AB}(\nabla_{A}h)(\partial_{B},\partial_{C}) = \bar{g}^{ab}(\nabla_{a}h)(\partial_{b},\partial_{C}))+f^{-2}\tilde{g}^{\alpha\beta}(\nabla_{\alpha}h)(\partial_{\beta},\partial_{C}).$$
	Using  Lemma \ref{WPcon} and the fact $h_{a\beta}=0$ we see
	$$(\nabla_{a}h)(\partial_{b},\partial_{\gamma})=0.$$
	Hence ${\bar{g}^{ab}(\nabla_{a}h)(\partial_{b},\partial_{C})) = \overline{\div}(\bar{h})(\partial_{C})}$. For the next term we again use the Christoffel symbols of Lemma \ref{WPcon}
	$$ f^{-2}\tilde{g}^{\alpha\beta}(\nabla_{\alpha}h)(\partial_{\beta},\partial_{c})= f^{-2}\tilde{g}^{\alpha\beta}(\nabla_{\alpha}h_{\beta c}-\Gamma_{\alpha \beta}^{d}h_{cd}-\Gamma_{\alpha c}^{\delta}h_{\beta \delta}),
	$$
	which as, $h_{\beta c} =0$, ${\Gamma_{\alpha \beta}^{d} = -\tilde{g}_{\alpha \beta}f\overline{g}^{de}(\overline{\nabla}_{e}f)}$, and $\Gamma_{\alpha c}^{\delta} = (\overline{\nabla}_{c}\log f)\delta^{\delta}_{\alpha},$ yields
	$$f^{-2}\tilde{g}^{\alpha\beta}(\nabla_{\alpha}h)(\partial_{\beta},\partial_{c})=m\bar{h}(\overline{\nabla}\log f,\partial_{c})-f^{-2}\psi\tilde{\tr}(\tilde{h})(\overline{\nabla}_{c}\log f).$$
	Finally, we consider
	$$f^{-2}\tilde{g}^{\alpha\beta}(\nabla_{\alpha}h)(\partial_{\beta},\partial_{\gamma}) = f^{-2}\tilde{g}^{\alpha\beta}(\nabla_{\alpha}h_{\beta \gamma}-\Gamma_{\alpha \beta}^{\delta}h_{\gamma \delta}-\Gamma^{\delta}_{\alpha \gamma}h_{\beta \delta} ).
	$$
	Hence as $h_{\alpha \beta} = \psi\tilde{h}_{\alpha \beta}$,
	$$
	f^{-2}\tilde{g}^{\alpha\beta}(\nabla_{\alpha}h)(\partial_{\beta},\partial_{\gamma}) = f^{-2}\psi\widetilde{\div}(\tilde{h})(\partial_{\gamma}).
	$$
	Combining each piece finishes the proof.
\end{proof}

To break down the calculation of the Lichnerowicz Laplacian, we first compute the connection Laplacian of the tensors $h$.
\begin{lemma}[Connection Laplacian]\label{LapLem}
	Let $(B\times F^{m},\overline{g}\oplus f^{2}\tilde{g})$ be a warped product manifold and let $h$ be of the form \textrm{(\ref{wptens})}. Then 
	\begin{align*}
	\Delta h & =   \overline{\Delta}\bar{h}+2f^{-2}\psi\tilde{\tr}(\tilde{h})(d\log f \otimes d\log f)\\
	& - m(d\log f\otimes \iota_{\overline{\nabla} \log f}\overline{h}+\iota_{\overline{\nabla} \log f}\overline{h}\otimes d\log f-(\overline{\nabla}_{\overline{\nabla}\log f}\overline{h}))\\
	& +(\overline{\Delta}\psi - 2\psi\overline{\Delta}\log f+(m-4)\overline{g}(\overline{\nabla}\psi,\overline{\nabla}\log f)+2(1-m)\psi|\overline{\nabla}\log f|^{2})\tilde{h}\\
	& +  f^{-2}\psi\widetilde{\Delta}\tilde{h}+2\overline{h}(\overline{\nabla}f,\overline{\nabla}f)\tilde{g}\\
	&-2\psi f^{-2}(d\log f\otimes \widetilde{\div}(\tilde{h})+\widetilde{\div}(\tilde{h})\otimes d\log f).
	\end{align*}
	
\end{lemma}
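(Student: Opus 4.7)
The proof is a direct computation of $\Delta h = g^{AB}\nabla^{2}_{A,B}h$ using the Christoffel symbols from Lemma \ref{WPcon}. The key simplification is that $h = \overline{h} \oplus \psi\tilde{h}$ is block-diagonal, with $\psi$ depending only on $B$ and $\tilde{h}$ only on $F$, so a large fraction of components of $\nabla h$ vanish at the outset.

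The plan is to split the trace as $g^{AB} = \overline{g}^{ab} \oplus f^{-2}\tilde{g}^{\alpha\beta}$ and first compute the components of $\nabla h$ case by case, according to whether each index lies in the base or the fibre. Each application of $\Gamma^{\gamma}_{a\beta} = (\overline{\nabla}_{a}\log f)\delta^{\gamma}_{\beta}$ produces terms proportional to $\overline{\nabla}\log f$ when differentiating the fibre block in a base direction, while the symbol $\Gamma^{c}_{\alpha\beta} = -\tilde{g}_{\alpha\beta}f\overline{g}^{cd}(\overline{\nabla}_d f)$ couples the two blocks: it moves fibre components into the base-base block (producing in particular the term $2\overline{h}(\overline{\nabla}f,\overline{\nabla}f)\tilde{g}$) and, after contracting with an undifferentiated fibre index of $\tilde{h}$, generates the mixed base-fibre terms involving $\widetilde{\div}(\tilde{h})$.

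Iterating to form $\nabla^{2}h$ and then tracing, the $\overline{g}^{ab}$ part yields $\overline{\Delta}\overline{h}$ from the base-base block, the correction $-m(d\log f \otimes \iota_{\overline{\nabla}\log f}\overline{h} + \cdots)$ arising from double contractions of $\Gamma^{\gamma}_{a\beta}$ (with the factor $m$ coming from $\delta^{\gamma}_{\gamma} = m$), and the scalar prefactor that multiplies $\tilde{h}$. The $f^{-2}\tilde{g}^{\alpha\beta}$ part contributes $f^{-2}\psi\widetilde{\Delta}\tilde{h}$, the term $2f^{-2}\psi\,\tilde{tr}(\tilde{h})\,d\log f\otimes d\log f$, and the mixed cross-terms involving $\widetilde{\div}(\tilde{h})$ that appear from $\nabla^{2}_{\alpha,c}$ acting on the fibre block.

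The main obstacle, and presumably the reason the authors suppress the proof, is pure bookkeeping: in particular the coefficients $(m-4)$ and $2(1-m)$ appearing in the scalar prefactor of $\tilde{h}$ must be assembled from several Christoffel contractions of different order, with the right cancellations between Hessian-type terms $\overline{\Delta}\psi - 2\psi\overline{\Delta}\log f$ and products of $\overline{\nabla}\psi$ with $\overline{\nabla}\log f$. A clean organising principle is to first compute $\nabla(\psi\tilde{h})_{\alpha\beta,c}$ and $\nabla(\psi\tilde{h})_{\alpha\beta,\gamma}$ via the product rule, and only then apply the second derivative and trace, so that the scalar factor $\psi$ and the tensorial factor $\tilde{h}$ are processed separately before being recombined in the final expression.
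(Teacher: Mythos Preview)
Your proposal is correct and matches the paper's approach: the authors explicitly omit the proof, stating that ``the calculations are entirely straightforward,'' and what you outline is precisely the direct computation of $g^{AB}\nabla^{2}_{A,B}h$ via the Christoffel symbols of Lemma~\ref{WPcon}. One small imprecision: the $m\overline{\nabla}_{\overline{\nabla}\log f}\overline{h}$ term actually arises from the connection term $-\Gamma^{c}_{\alpha\beta}\nabla_{c}$ in the second covariant derivative (with the factor $m$ coming from $\tilde{g}^{\alpha\beta}\tilde{g}_{\alpha\beta}=m$) rather than from $\Gamma^{\gamma}_{a\beta}$, but this does not affect the overall strategy.
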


\begin{proof}
We begin by noting that, for any tensor $T$, ${\nabla^{2}_{A,B}T = \nabla_{A}\nabla_{B}T-\nabla_{\nabla_{A}B}T}$. We need to compute
$$\Delta h = g^{AB}\nabla^{2}_{A,B}h =\bar{g}^{ab}\nabla^{2}_{a,b}h+f^{-2}\tilde{g}^{\alpha\beta}\nabla^{2}_{\alpha,\beta}h.$$
The proof proceeds by using the coordinate description of the connection given in Lemma \ref{WPcon}.  We will compute each separate part of $\Delta h$.\\
\\
We begin by computing $(\Delta h)_{cd}$. We can easily verify that
\begin{equation}\label{proofeq11}
(\nabla_{a}\nabla_{b}h)_{cd} = (\overline{\nabla}_{a}\overline{\nabla}_{b}\bar{h})_{cd}
\end{equation}
and  
\begin{equation}\label{proofeq12}
(\nabla_{\nabla_{a}b}h)_{cd} = (\overline{\nabla}_{\overline{\nabla}_{a}b}\bar{h})_{cd}. 
\end{equation} 
To compute the $(\nabla^{2}_{\alpha,\beta}h)_{cd}$ terms, we expand
$$(\nabla_{\alpha}\nabla_{\beta}h)_{cd} = \nabla_{\alpha}(\nabla_{\beta}h)_{cd}-(\nabla_{\beta}h)(\nabla_{\alpha}{c},\partial_{d}) -(\nabla_{\beta}h)(\partial_{c},\nabla_{\alpha}{d}), $$
which yields
$$(\nabla_{\alpha}\nabla_{\beta}h)_{cd}  = \nabla_{\alpha}(\nabla_{\beta}h)_{cd}-(\overline{\nabla}_{c}\log f)(\nabla_{\beta}h)_{\alpha d} -(\overline{\nabla}_{d}\log f)(\nabla_{\beta}h)_{c \alpha}.$$
A straightforward check shows $$(\nabla_{\beta}h)_{cd}=0, $$ and, for example,
$$(\nabla_{\beta}h)_{\alpha d}= \tilde{g}_{\alpha\beta}f\bar{h}(\overline{\nabla}f,\partial_{d})-\psi(\overline{\nabla}_{d}\log f)\tilde{h}_{\alpha\beta}.$$
Putting all this together we obtain
\begin{equation}\label{proofeq13}
(\nabla_{\alpha}\nabla_{\beta}h)_{cd} = 2\psi\tilde{h}_{\alpha \beta}(\overline{\nabla}_{c}\log f\overline{\nabla}_{d}\log f) - \tilde{g}_{\alpha\beta}f\left((\overline{\nabla}_{c}\log f)\bar{h}(\overline{\nabla}f,\partial_{d})+(\overline{\nabla}_{d}\log f)\bar{h}(\overline{\nabla}f,\partial_{c})\right).
\end{equation}
In a similar manner we see that
\begin{equation}\label{proofeq14}
(\nabla_{\nabla_{\alpha}\beta}h)_{cd} = -\tilde{g}_{\alpha\beta}f(\overline{\nabla}_{\overline{\nabla}f}\bar{h})_{cd}.\end{equation}
Combining Equations (\ref{proofeq11}) and (\ref{proofeq12}), and (\ref{proofeq13}) and (\ref{proofeq14}), then taking traces yields
  	\begin{align*}
  (\Delta h)_{cd} & =   (\overline{\Delta}\bar{h})_{cd}+2f^{-2}\psi\tilde{\tr}(\tilde{h})(d\log f \otimes d\log f)_{cd}\\
  & - m(d\log f\otimes \iota_{\overline{\nabla} \log f}\overline{h}+\iota_{\overline{\nabla} \log f}\overline{h}\otimes d\log f-(\overline{\nabla}_{\overline{\nabla}\log f}\overline{h}))_{cd}.\\
  \end{align*}
  \\
  We now compute $(\Delta h)_{\gamma\delta}$.  The term
  $$(\nabla_{a}\nabla_{b} h)_{\gamma \delta} = \nabla_{a}(\nabla_{b} h)_{\gamma \delta} - (\nabla_{b} h)(\nabla_{a}\gamma,\partial_{\delta})- (\nabla_{b} h)(\partial_{\gamma},\nabla_{a}\delta),$$
  is given by
  $$(\nabla_{a}\nabla_{b} h)_{\gamma \delta} = \nabla_{a}(\nabla_{b} h)_{\gamma \delta} -2(\overline{\nabla}_{a}\log f)(\nabla_{b} h)_{\gamma \delta}.$$
  We expand
  $$(\nabla_{b} h)_{\gamma \delta} = (\overline{\nabla}_{b}\psi-2\psi\overline{\nabla}_{b}\log f)\tilde{h}_{\gamma\delta},$$
  which yields
  $$(\nabla_{a}\nabla_{b} h)_{\gamma \delta}  =$$ 
  \begin{equation}\label{proofeq21}
  (\overline{\nabla}_{a}\overline{\nabla}_{b}\psi-2(\overline{\nabla}_{a}\psi\overline{\nabla}_{b}\log f+\overline{\nabla}_{b}\psi\overline{\nabla}_{a}\log f)-2\psi\overline{\nabla}_{a}\overline{\nabla}_{b}\log f+4\psi\overline{\nabla}_{a}\log f\overline{\nabla}_{b}\log f)\tilde{h}_{\gamma\delta}.
   \end{equation}
  Similarly, we obtain
  \begin{equation}\label{proofeq22}
  (\nabla_{\nabla_{a}b} h)_{\gamma \delta}  = \left(\overline{\nabla}_{\overline{\nabla}_{a}b}\psi-2\psi\overline{\nabla}_{\overline{\nabla}_{a}b}\log f \right)\tilde{h}_{\gamma\delta}.
  \end{equation}
  
 To compute $(\nabla_{\alpha}\nabla_{\beta}h)_{\delta\gamma}$ we expand, 
 $$ (\nabla_{\alpha}\nabla_{\beta}h)_{\delta\gamma}= \nabla_{\alpha}(\nabla_{\beta}h)_{\gamma\delta}-(\nabla_{\beta}h)(\nabla_{\alpha}\gamma,\partial_{\delta})-(\nabla_{\beta}h)(\partial_{\gamma}, \nabla_{\alpha}\delta),$$
 $$ = \nabla_{\alpha}(\nabla_{\beta}h)_{\gamma\delta} - (\nabla_{\beta}h)(\widetilde{\nabla}_{\alpha}\gamma,\partial_{\delta})-(\nabla_{\beta}h)(\partial_{\gamma}, \widetilde{\nabla}_{\alpha}\delta)+f\tilde{g}_{\alpha \gamma}(\nabla_{\beta}h)(\overline{\nabla}f,\partial_{\delta})+f\tilde{g}_{\alpha \delta}(\nabla_{\beta}h)(\partial_{\gamma},\overline{\nabla}f).$$
 Noting that ${(\nabla_{\beta} h)_{\delta \gamma}  =\psi(\widetilde{\nabla}_{\beta} \tilde{h})_{\delta \gamma}}$ and, for example, 
$$ (\nabla_{\beta}h)(\overline{\nabla}f,\partial_{\delta}) = -f|\overline{\nabla} \log f|^{2}\psi \tilde{h}_{\beta\delta}+f\tilde{g}_{\beta\delta}\bar{h}(\overline{\nabla}f,\overline{\nabla}f), $$
we obtain
$$ (\nabla_{\alpha}\nabla_{\beta}h)_{\gamma\delta} = $$
\begin{equation}\label{proofeq23}
\psi\widetilde{\nabla}_{\alpha}(\widetilde{\nabla}_{\beta}\tilde{h})_{\gamma\delta}+f^{2}(\tilde{g}_{\alpha\gamma}\tilde{g}_{\beta\delta}+\tilde{g}_{\alpha\delta}\tilde{g}_{\beta\gamma})\bar{h}(\overline{\nabla}f,\overline{\nabla}f)-f^{2}|\overline{\nabla} \log f|^{2}\psi((\tilde{g}_{\alpha\gamma}\tilde{h}_{\beta\delta}+\tilde{g}_{\alpha\delta}\tilde{h}_{\beta\gamma}).
\end{equation}
In a similar manner we see that
\begin{equation}\label{proofeq24}
(\nabla_{\nabla_{\alpha}\beta}h)_{\gamma\delta} = \psi(\widetilde{\nabla}_{\widetilde{\nabla}_{\alpha}\beta}\tilde{h})_{\gamma\delta} + \tilde{g}_{\alpha\beta}f^{2}\left(2\psi|\overline{\nabla}\log f|^{2}-\bar{g}(\overline{\nabla} \log f,\overline{\nabla} \psi)\right)\tilde{h}_{\gamma\delta}.
\end{equation}
Combining Equations (\ref{proofeq21}) and (\ref{proofeq22}), and (\ref{proofeq23}) and (\ref{proofeq24}), then taking traces yields
\begin{align*}
(\Delta h)_{\gamma\delta}  & =\\
& (\overline{\Delta}\psi - 2\psi\overline{\Delta}\log f+(m-4)\overline{g}(\overline{\nabla}\psi,\overline{\nabla}\log f)+2(1-m)\psi|\overline{\nabla}\log f|^{2})\tilde{h}_{\gamma\delta}\\
& +  f^{-2}\psi(\widetilde{\Delta}\tilde{h})_{\gamma\delta}+2\overline{h}(\overline{\nabla}f,\overline{\nabla}f)\tilde{g}_{\gamma\delta}. 
\end{align*}
\\
Finally, we calculate $(\Delta h)_{c\delta}$.  As $(\nabla_{b} h )_{c\delta}=0$, we find
$$(\nabla_{a}\nabla_{b} h)_{c\delta} = 0 \qquad \textrm{ and } \qquad (\nabla_{\nabla_{a}b} h)_{c\delta}  = 0.$$
Expanding $(\nabla_{\alpha}\nabla_{\beta} h)_{c\delta}$ yields
$$(\nabla_{\alpha}\nabla_{\beta} h)_{c\delta} = \nabla_{\alpha}(\nabla_{\beta}h)_{c\delta}-(\overline{\nabla}_{c}\log f)(\nabla_{\beta}h)_{\alpha \delta}-(\nabla_{\beta}h)(\partial_{c},\widetilde{\nabla}_{\alpha}\delta)+f\tilde{g}_{\alpha\delta}(\nabla_{\beta}h)(\partial_{c},\overline{\nabla}f).$$
Computing each term yields
$$(\nabla_{\beta}h)_{c\delta} = -\overline{\nabla}_{c}\log f \psi\tilde{h}_{\beta\delta}+f\tilde{g}_{\beta\delta}\bar{h}(\partial_{c},\overline{\nabla}f),$$
$$(\nabla_{\beta}h)_{\alpha\delta} = \psi(\widetilde{\nabla}_{\beta}\tilde{h})_{\alpha\delta},$$
$$ (\nabla_{\beta}h)(\partial_{c},\widetilde{\nabla}_{\alpha}\delta) =-(\overline{\nabla}_{c} \log f)\psi\tilde{h}(\partial_{\beta},\widetilde{\nabla}_{\alpha}\delta)+f\tilde{g}(\partial_{\beta},\widetilde{\nabla}_{\alpha}\delta )\bar{h}(\partial_{c},\overline{\nabla} f),$$
and
$$(\nabla_{\beta}h)(\partial_{c},\overline{\nabla} f) =0.$$
We also have
$$(\nabla_{\nabla_{{\alpha}}\beta}h)_{c\delta} =-(\overline{\nabla}_{c}\log f)\psi \tilde{h}(\widetilde{\nabla}_{\alpha}\beta,\partial_{\delta})+f\tilde{g}(\widetilde{\nabla}_{\alpha}\beta,\partial_{\delta})\bar{h}(\partial_{c},\overline{\nabla}f). $$
Putting this all together, we have
$$(\nabla^{2}_{\alpha,\beta} h)_{c\delta} = -(\overline{\nabla}_{c}\log f)\psi\left((\widetilde{\nabla}_{\alpha}\tilde{h})_{\beta\delta}+(\widetilde{\nabla}_{\beta}\tilde{h})_{\alpha\delta}\right)+f\bar{h}(\overline{\nabla}f,\partial_{c})(\widetilde{\nabla}_{\alpha}\tilde{g})_{\beta\delta}.$$
Taking traces and noting that $\widetilde{\nabla}\tilde{g}=0$ gives 
$$(\Delta h)_{c\delta} = -2\psi f^{-2} (\overline{\nabla}_{c}\log f)\widetilde{\div}(\tilde{h})_{\delta}.$$
\end{proof}

The other component of the Lichnerowicz Laplacian is the curvature operator.
\begin{lemma}[Curvature operator]\label{COLem}
	Let $(B\times F^{m},\overline{g}\oplus f^{2}\tilde{g})$ be a warped product manifold. and let $h$ be of the form (\ref{wptens}). Then 
	
	\begin{align*}
	\Riem(h,\cdot) & =  \overline{\Riem}(\overline{h},\cdot)-f^{-3}\psi\tilde{\tr}(\tilde{h})(\overline{\nabla}^{2}f)+f^{-2}\psi\widetilde{\Riem}(\tilde{h},\cdot)\\
	&-\psi|\overline{\nabla} \log f|^{2}(\tilde{\tr}(\tilde{h})\tilde{g}-\tilde{h})-f\langle \overline{\nabla}^{2}f,\bar{h}\rangle_{\bar{g}}\tilde{g}.
	\end{align*}
\end{lemma}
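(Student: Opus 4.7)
The plan is a direct index calculation: unwind the definition $\Riem(h,\cdot)_{AB}=\Riem_{ACBD}h^{CD}$ using the block structure of both $h$ and the warped-product Riemann tensor. The approach mirrors the one used in the previous lemmas: decompose in horizontal/vertical indices and match the resulting pieces.

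First I would record that, since $g^{-1}=\overline{g}^{-1}\oplus f^{-2}\tilde{g}^{-1}$, the fully-raised tensor satisfies $h^{cd}=\overline{h}^{cd}$ on the base block (indices raised with $\overline{g}$) and $h^{\gamma\delta}=f^{-4}\psi\,\tilde{h}^{\gamma\delta}$ on the fibre block (indices raised with $\tilde{g}$), with mixed components zero. Next I would derive (or cite, e.g.\ Besse Prop.\ 9.106, and verify against the Christoffel symbols of Lemma \ref{WPcon}) that the only non-vanishing components of $\Riem$ for $g=\overline{g}\oplus f^{2}\tilde{g}$, up to the usual symmetries, are
\begin{align*}
\Riem_{abcd}&=\overline{\Riem}_{abcd},\\
\Riem_{a\alpha b\beta}&=-f\,\tilde{g}_{\alpha\beta}\,\overline{\nabla}^{2}_{ab}f,\\
\Riem_{\alpha\beta\gamma\delta}&=f^{2}\widetilde{\Riem}_{\alpha\beta\gamma\delta}-f^{2}|\overline{\nabla}f|^{2}\bigl(\tilde{g}_{\alpha\gamma}\tilde{g}_{\beta\delta}-\tilde{g}_{\alpha\delta}\tilde{g}_{\beta\gamma}\bigr).
\end{align*}
In particular, every component with an odd number of fibre indices vanishes.

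With these two ingredients the computation splits into cases according to whether the free indices $A,B$ are horizontal or vertical. For $A=a,B=b$ horizontal, only the purely base and purely fibre internal contractions survive; the first yields $\overline{\Riem}(\overline{h},\cdot)_{ab}$ and the second, after using $\tilde{g}_{\gamma\delta}\tilde{h}^{\gamma\delta}=\tilde{\tr}(\tilde{h})$, gives $-f^{-3}\psi\,\tilde{\tr}(\tilde{h})\,\overline{\nabla}^{2}_{ab}f$. For $A=\alpha,B=\beta$ vertical, the base contraction contributes $-f\,\tilde{g}_{\alpha\beta}\langle\overline{\nabla}^{2}f,\overline{h}\rangle_{\overline{g}}$, while the fibre contraction contributes $f^{-2}\psi\widetilde{\Riem}(\tilde{h},\cdot)_{\alpha\beta}$ together with $-\psi|\overline{\nabla}\log f|^{2}\bigl(\tilde{\tr}(\tilde{h})\tilde{g}_{\alpha\beta}-\tilde{h}_{\alpha\beta}\bigr)$ after absorbing the factor $f^{2}$ via $|\overline{\nabla}f|^{2}=f^{2}|\overline{\nabla}\log f|^{2}$. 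Finally, the mixed $(a,\beta)$ blocks vanish because any surviving contraction would require $\Riem$ to have an odd number of fibre indices. Collecting the four non-zero pieces produces the stated formula.

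There is no serious mathematical obstacle here; the only real care-point is bookkeeping of sign conventions, since the paper uses the convention $\Riem(X,Y)Z=\nabla^{2}_{Y,X}Z-\nabla^{2}_{X,Y}Z$. I would pin down the sign of the mixed component $\Riem_{a\alpha b\beta}$ by an explicit check from the Christoffel symbols of Lemma \ref{WPcon}; once that single sign is fixed, every other term follows by the same symmetry and contraction manipulations used in Lemmas \ref{LapLem} and \ref{WPdiv}, so no new idea is needed.
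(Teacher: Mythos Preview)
The paper omits the proof of this lemma entirely, stating at the start of Section~\ref{sec:3} that ``the calculations are entirely straightforward so we omit the proofs.'' Your direct index computation is correct and is exactly the kind of routine block-by-block contraction the authors have in mind; there is nothing to compare against.
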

\begin{proof}
	The curvature operator is given by
	$$\Riem(h,\cdot)_{AB} = R_{ACBD}h^{CD},$$
	which, given the form of $h$, can be written
	$$\Riem(h,\cdot)_{AB} =R_{AcBd}\bar{h}^{cd}+\psi f^{-4} R_{A\gamma B\delta}\tilde{h}^{\gamma\delta}.$$
	(The factor of $f^{-4}$ appears as we are raising two indices of $\tilde{h}$). We now use the explicit components of the curvature tensor given in Lemma \ref{CurvTenWP}.\\
	\\
	As $R_{\alpha\beta\gamma d} = R_{abc\delta}=0$ we see that $\{A,B\} = \{a,b\}$ or $\{A,B\} = \{\alpha,\beta\}.$
	For the former set we obtain
	$$\Riem(h,\cdot)_{ab} = R_{acbd}\bar{h}^{cd}+\psi f^{-4} R_{a\gamma b\delta}\tilde{h}^{\gamma\delta} = \bar{R}_{acbd}\bar{h}^{cd}-\psi f^{-4}fg_{\gamma \delta}(\overline{\nabla}^{2}f)_{ab}\tilde{h}^{\gamma\delta},$$
	$$ = \overline{\Riem}(\bar{h},\cdot)_{ab}-f^{-3}\psi\tilde{tr}(\tilde{h})(\overline{\nabla}^{2}f)_{ab}.
	$$
	For the latter set we obtain
	$$\Riem(h,\cdot)_{\alpha\beta} = R_{\alpha c\beta d}\bar{h}^{cd}+\psi f^{-4} R_{\alpha\gamma \beta\delta}\tilde{h}^{\gamma\delta},$$ 
	$$= -f\tilde{g}_{\alpha \beta}(\overline{\nabla}^{2}f)_{cd}\bar{h}^{cd}+\psi f^{-2}\widetilde{R}_{\alpha\gamma\beta\delta}\tilde{h}^{\gamma\delta}-\psi f^{-2}|\overline{\nabla} f|^{2}(\tilde{g}_{\alpha\beta}\tilde{g}_{\gamma\delta}-\tilde{g}_{\alpha\gamma}\tilde{g}_{\beta\delta})\tilde{h}^{\gamma\delta}.$$
	This gives
	$$\Riem(h,\cdot)_{\alpha\beta} = -f\langle \overline{\nabla}^{2}f,\bar{h}\rangle_{\bar{g}}\tilde{g}_{\alpha\beta}+\psi f^{-2}\widetilde{\Riem}(\tilde{h},\cdot)_{\alpha\beta}-\psi|\overline{\nabla}\log f|^{2}(\tilde{\tr}(\tilde{h})\tilde{g}_{\alpha\beta}-\tilde{h}_{\alpha\beta}),$$
	and the result now follows.
\end{proof}

\section{The proofs of Theorems \ref{thmA}, \ref{thmC} and \ref{thmE}}
\label{sec:4}
In order to prove Theorems \ref{thmA} and \ref{thmE}, we consider a generalisation of the variation considered by Gibbons, Hartnoll, and Pope in \cite{GHP}. Using the terminology of \cite{GHP}, the perturbation is a `balloning mode' which generalises the manner one destabilises an ordinary Riemannian product by changing the relative volumes of the base and fibre. 
\begin{definition}[GHP variations]\label{GHPvardef}
	Let $(B\times F^{m}, \bar{g}\oplus f^{2}\tilde{g})$ be a warped product manifold. The GHP variations are the tensors
		\begin{equation}\label{dGHP}
		h=\left(\frac{f^{k}}{n}\bar{g}\right)\oplus\left(\frac{(m+k)f^{k+2}}{mn}\tilde{g}\right),
		\end{equation}
		where $k\in \mathbb{R}\backslash\{0\} $.
\end{definition}

\begin{proposition}\label{GHPprop}
	Let $(B^{n}\times F^{m},\bar{g}\oplus f^{2}\tilde{g})$ be a warped product Einstein manifold. Then any GHP variation $h$ is divergence-free. Furthermore if $c$ is the constant defined in Lemma \ref{Lem1}, then
	$$\langle N(h+cg),h+cg\rangle_{L^{2}(g)} =  C^1_{n,m,k}  \int_{B}f^{2k+m-2}|\overline{\nabla}f|^{2}dV_{\bar{g}}+\lambda\left( \|h\|^{2}-\frac{\left(\int_{M}\tr(h) dV_{g} \right)^{2}}{\|g\|^{2}}\right), $$
	where $$ C^1_{n,m,k} = -\frac{\vol(F)}{2}\left(\frac{k^{2}(4k + 2m + mn + (m+k)^{2})}{n^{2}m} \right).$$
\end{proposition}
\begin{proof}
Consider a variation
$$h=A f^{k}\bar{g}\oplus B f^{k+2}\tilde{g}.$$
A routine calculation using Lemma \ref{WPdiv} yields
$$\div(h) = \left((m+k)A-mB \right)f^{k-1}df.$$
Hence if the constants $A$ and $B$ are chosen so that
$$A=\frac{m}{m+k}B,$$
then $h$ is divergence-free. The GHP variation (\ref{dGHP}) has $A=1/n$ and $B=(m+k)/mn$ and so it is divergence-free.
Using Lemma \ref{LapLem} with 
$$\bar{h}=\frac{f^{k}}{n}\bar{g}, \qquad \tilde{h} = \tilde{g} \qquad \textrm{and} \qquad \psi = \frac{(m+k)f^{k+2}}{mn},$$
we obtain
$$\langle \Delta h,h\rangle = \frac{f^{k}\overline{\Delta}f^{k}}{n}+\frac{(m+k)^{2}}{mn^{2}}f^{k-2}\overline{\Delta}f^{k+2}-2\frac{(m+k)^{2}}{mn^{2}}f^{2k}\overline{\Delta}\log f+C^2_{n,m,k}f^{2k-2}|\overline{\nabla} f|^{2},$$
where
$$C^2_{n,m,k}=\frac{km^{3} - 4km^{2} - 8k^{2}m - 8km + k^{3}m - 6k^{2} - 4k^{3} - 4m^{2} + 2k^{2}m^{2} + km^{2}n}{mn^{2}}.$$
Integrating by parts yields
$$\int_{M}\langle \Delta h,h\rangle dV_{g}  = -\vol(F)\left(\frac{k^{2}(k^{2} + 2km + m^{2} + nm + 2)}{mn^{2}}\right)\int_{B}f^{2k+m-2}|\overline{\nabla}f|^{2}dV_{\bar{g}}.$$
Using Lemma \ref{COLem} (and simplifying using  Equations (\ref{KKeq}) and (\ref{QEMeq})) we obtain
$$\langle \Riem(h,\cdot),h\rangle = \left(\frac{(m+k)^{2}}{mn^{2}}-\frac{m+2k}{n^{2}}\right)f^{2k-1}\overline{\Delta}f+\lambda f^{2k}\left(\frac{1}{n}+\frac{(m+k)^{2}}{mn^{2}} \right).$$
Integrating by parts yields
$$\int_{M}\langle \Riem(h,\cdot),h\rangle dV_{g} = \vol(F)\left( \frac{-k^{2}(2k+m-1)}{mn^{2}}\right)\int_{B}f^{2k+m-2}|\overline{\nabla}f|^{2}dV_{\bar{g}}+\lambda \|h\|^{2}_{L^{2}(g)}.$$
The result follows immediately using the simplication given in Lemma \ref{Lem1}.
\end{proof}
The proofs of Theorems \ref{thmA} and \ref{thmE} now follow by taking special values of the parameter $k$.
\begin{proof}[Proof of Theorem \ref{thmA}]
As $k\neq 0$, no GHP variation is a multiple of the metric $g$ and the Cauchy-Schwarz inequality applied to $\langle h,g\rangle_{L^{2}(g)}$ implies that
$$
\|h\|^{2}-\frac{\left(\int_{M}\tr(h) dV_{g} \right)^{2}}{\|g\|^{2}}> 0.
$$
Hence we wish to select $k$ such that the term $C^{1}_{n,m,k}$ in Proposition \ref{GHPprop} is non-negative. Clearly this can only occur if 
$$(4k + 2m + mn + (m+k)^{2})\leq 0.$$
Viewed as a quadratic in $k$, $(4k + 2m + mn + (m+k)^{2})$ is minimised when 
$k=-(2+m)$ giving a value of $((n-2)m-4)$. Hence the coefficient  $C^{1}_{n,m,k}$ is only non-negative when $(n-2)m\leq 4$. The result follows noting that either $n=3$ and $m\in\{2,3,4\}$ or $n=4$ and $m=2$. 	This covers all possible six-dimensional products as if the base has dimension 2 then, by the rigidity theorem of Case--Shu--Wei \cite{CSW}, the function $f$ is constant and the product is trivially unstable. The fibre of a  warped product Einstein manifold with positive Einstein constant must be at least two-dimensional, as, by Myers's theorem the product cannot have infinite fundamental group.   	
\end{proof}	 
 
\begin{proof}[Proof of Theorem \ref{thmE}]
We take $k=-(m+n)$ in Proposition \ref{GHPprop}. In this case the GHP variation $h$ is divergence-free and trace-free. This means that the constant $c=0$ and so 
$$\langle N(h),h\rangle_{L^{2}(g)} = \frac{1}{2}\langle\Delta_{L}h+2\lambda h,h\rangle_{L^{2}(g)}.$$
Computing using Proposition \ref{GHPprop} and rearranging yields
$$\langle \Delta_{L}h,h\rangle_{L^{2}(g)} =\vol(F)\left(\frac{(m+n)^{2}(4n+2m-n^{2}-mn)}{n^{2}m} \right)\int_{B}f^{-2n-m-2}|\overline{\nabla}f|^{2}dV_{\bar{g}}.$$
This is manifestly non-negative when $n=3$ and $m=2$ or $m=3$. Hence the Lichnerowicz Laplacian, when restricted to divergence-free, trace-free tensors, has a non-negative eigenvalue $-\kappa$ for some $\kappa\leq 0$. The instability follows from Proposition \ref{BHstab} as
$$\kappa \leq 0 < \frac{(9-(3+m))\lambda}{4}$$
when $m=2$ or $m=3$.
\end{proof}
We use a similar method to prove Theorem \ref{thmC}.
\begin{proof}[Proof of Theorem \ref{thmC}]
We consider the perturbation
$$h = 0\oplus \sigma.$$
Decomposing the variation $h$ in the manner of (\ref{wptens}) we see that 
$$\bar{h}=0, \quad \tilde{h}=\sigma \textrm{ and } \psi = 1.$$
This immediately yields $\div(h)=0$ by Lemma \ref{WPdiv} and it is clear $h$ is trace-free as $\sigma$ is trace-free. Using  Lemma \ref{LapLem} and Lemma \ref{COLem} we obtain
$$\Delta h = f^{-2}\widetilde{\Delta}\sigma +(2(1-m)|\overline{\nabla} \log f|^{2}-2\overline{\Delta} \log f)\sigma,$$ 
and
$$\Riem(h,\cdot) =  f^{-2}\widetilde{\Riem}(\sigma,\cdot)+|\overline{\nabla}\log f|^{2}\sigma.$$
Hence
$$\langle\Delta_{L} h,h\rangle_{g} = f^{-6}\langle \widetilde{\Delta}_{L}\sigma,\sigma\rangle_{\tilde{g}}+2(\mu f^{-6}-\lambda f^{-4}+(2-m)f^{-6}|\overline{\nabla} f|^{2}-f^{-4}\overline{\Delta}\log f)|\sigma|_{\tilde{g}}^{2}.$$
Integrating by parts and using the fact ${\widetilde{\Delta}_{L}\sigma=-\kappa\sigma}$ we obtain
\begin{equation}\label{thmCeq}
\int_{M}\langle\Delta_{L} h,h\rangle_{g} dV_{g} =
2\|\sigma\|^{2}_{\tilde{g}}\int_{B} \left(\mu-\frac{\kappa}{2}\right) f^{m-6}-\lambda f^{m-4}-2f^{m-6}|\overline{\nabla}f|^{2}dV_{\bar{g}}.
\end{equation}
Multiplying Equation (\ref{KKeq}) by $f^{m-6}$ and integrating we see that
$$\int_{B}\frac{\mu}{2} f^{m-6}-2f^{m-6}|\overline{\nabla}f|^{2} dV_{\bar{g}} = \frac{\lambda}{2} \int_{B} f^{m-4}dV_{\bar{g}}.$$
Hence if $\kappa < \mu$, we can substitute into Equation (\ref{thmCeq}) and get the inequality
\begin{equation}\label{FUWPeq}
\int_{M}\langle \Delta_{L}h,h\rangle dV_{g} > -\lambda\|\sigma\|^{2}_{\tilde{g}}\int_{B}f^{m-4}dV_{\bar{g}} = -\lambda \|h\|^{2}_{g}.
\end{equation}
Hence the result follows.
\end{proof}
We can now prove Corollary \ref{Cor1}.
\begin{proof}[Proof of Corollary \ref{Cor1}]
 Einstein products and K\"ahler-Einstein metrics with $h^{1,1}>1$ (all with Einstein constant $\mu>0$) admit divergence-free, trace-free eigentensors of the Lichnerowicz Laplacian with eigenvalue $0$ (see \cite{CHI}, \cite{CZ}, and \cite{HMPAMS}). Equation (\ref{FUWPeq}) shows that a fibre-unstable warped product must also have a divergence-free, trace-free eigentensor satisfying the destabilising conditions and so must be unstable as a fibre. 
\end{proof} 

\section{The proof of Theorem \ref{thmB}}
\label{sec:5}
In \cite{HPW} He, Petersen, and Wylie introduced the following function $\rho$ and symmetric $(0,2)$-tensor $P$ associated to a quasi-Einstein metric $(B^{n},\bar{g},f,m)$ solving Equation (\ref{QEMeq}):
$$ \rho =  \frac{1}{m-1}\left((n-1)\lambda-\textrm{scal}(\bar{g})\right),$$ 
$$P = \Ric(\bar{g})-\rho \bar{g},$$
where $\textrm{scal}(\bar{g})$ is the scalar curvature of $\bar{g}$. Using the fact that
$$\Ric(\bar{g})  = mf^{-1}\overline{\nabla}^{2}f+\lambda \bar{g},$$
and
$$\textrm{scal}(\bar{g}) = mf^{-1}\overline{\Delta}f+n\lambda,$$
we can write the tensor $fP$ as
$$fP = m\overline{\nabla}^{2}f+\dfrac{m}{m-1}(\overline{\Delta}f+\lambda f)\bar{g}.$$
If the quasi-Einstein metrics appear as a family $(B^{n},\bar{g}_{i},f_{i},m_{i})$ with $\bar{g}_{i}\rightarrow \bar{g}_{\infty}$ and ${m_{i}\rightarrow \infty}$ as ${i\rightarrow \infty}$, then the tensors $P_{i}\rightarrow \Ric(\bar{g}_{\infty})$. As we saw in Proposition 2.6, the Ricci tensor of a gradient soliton plays an important role in the destabilising of product solitons. Hence we expect the tensor $P$ to play a similar role for warped product Einstein metrics. As we shall see, it is the tensor $fP$ that is divergence-free and so we have the following definition.

\begin{definition}[Ricci variation]\label{Ricvardef}
Let $(B^{n}\times F^{m}, \bar{g}\oplus f^{2}\tilde{g})$ be a warped product Einstein manifold with Einstein constant $\lambda$. Then the Ricci variation is the tensor
\begin{equation}\label{BHMvar}
h=\left(m\overline{\nabla}^{2}f +\frac{m}{m-1}(\overline{\Delta}f+\lambda f)\bar{g}\right) \oplus 0.
\end{equation}
\end{definition}
\begin{lemma} \label{Lem52}
The Ricci variation (\ref{BHMvar}) is divergence-free.
\end{lemma}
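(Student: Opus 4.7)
The plan is to decompose the Ricci variation as $h = h_{1}+h_{2}$ with $h_{1}=fP\oplus 0$ and $h_{2}=cg$, and verify that each piece is divergence free separately. The second claim is immediate: since the Levi-Civita connection is metric, $\nabla g = 0$, hence $\div(cg) = 0$.

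For $h_{1}$, I would apply Lemma \ref{WPdiv} with $\bar{h}=fP$, $\tilde{h}=0$ and $\psi=0$. The last two terms of that formula vanish identically, leaving
\begin{equation*}
\div(h_{1})(\cdot) = \overline{\div}(fP)(\cdot)+m(fP)(\overline{\nabla}\log f,\cdot).
\end{equation*}
The task then reduces to evaluating $\overline{\div}(fP)$ using the He--Petersen--Wylie identity $\overline{\div}(f^{m+1}P)=0$. Expanding via the Leibniz rule gives $0 = f^{m+1}\overline{\div}(P)+(m+1)f^{m}P(\overline{\nabla}f,\cdot)$, so dividing through by $f^{m}$ (using positivity of $f$) yields $f\overline{\div}(P) = -(m+1)P(\overline{\nabla}f,\cdot)$. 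A second Leibniz expansion then produces
\begin{equation*}
\overline{\div}(fP) = f\overline{\div}(P)+P(\overline{\nabla}f,\cdot) = -mP(\overline{\nabla}f,\cdot) = -mfP(\overline{\nabla}\log f,\cdot),
\end{equation*}
which is exactly the negative of the second term displayed above. The two contributions cancel and $\div(h_{1})=0$.

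There is no substantive obstacle here: the argument is just a direct application of Lemma \ref{WPdiv} together with two uses of the Leibniz rule applied to the He--Petersen--Wylie identity. The only bookkeeping care needed is keeping the factors $(m+1)$ versus $m$ straight between the two expansions, and being consistent in treating $fP$ as the pullback $\pi_{B}^{\ast}(fP)$ on $M$ with vanishing fibre component, so that its decomposition matches the form of the ansatz (\ref{wptens}) used in Lemma \ref{WPdiv}.
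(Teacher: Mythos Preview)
Your proof is correct and follows essentially the same approach as the paper: both use Lemma \ref{WPdiv} together with the He--Petersen--Wylie identity $\overline{\div}(f^{m+1}P)=0$, and both handle the $cg$ term trivially. The only cosmetic difference is that the paper first records the general observation that any $\bar{h}\in s^{2}(TB)$ with $\overline{\div}(\bar{h})=0$ gives rise to a divergence-free tensor $f^{-m}\bar{h}\oplus 0$ on the warped product (and then applies this with $\bar{h}=f^{m+1}P$), whereas you carry out the equivalent Leibniz bookkeeping directly on $fP$.
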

\begin{proof}
We begin by noting if $\bar{\eta} \in s^{2}(TB)$ satisfies $\overline{\div}(\bar{\eta})=0$, then using Lemma \ref{WPdiv},
$$\eta=f^{-m}\bar{\eta}\oplus 0,$$
satisfies $\div(\eta)=0$. In Proposition 5.4 of \cite{HPW}, the authors showed that $\overline{\div}(f^{m+1}P)=0$ and so $h=fP\oplus 0$ satisfies $\div(h)=0$.
\end{proof}
The hypothesis of Theorem \ref{thmB} will be used to guarantee that the functions $f_{i}$ have certain limiting behaviours.  We collect what we will need in a lemma. Note that  $(B,\bar{g}_{i},f_{i},m_{i})$ converging the $C^{\infty}$ topology to a non-trivial Ricci soliton $(B,\bar{g}_{\infty},\varphi)$ means $g_{i}\rightarrow g_{\infty}$, $f_{i}^{m_{i}}\rightarrow e^{-\varphi}$, and $m_{i}\rightarrow \infty$ as $i \rightarrow \infty$ where the convergence is uniform with respect to any $C^{k}$-norm.

\begin{lemma} \label{convLem}
	Let $(B,\bar{g}_{i},f_{i},m_{i})$ be a one parameter family of quasi-Einstein metrics that converge in the $C^{\infty}$ topology to a non-trivial Ricci soliton $(B,\bar{g}_{\infty},\varphi)$ as in Theorem \ref{thmB}. Then
	\begin{enumerate}
		\item The functions $f_{i} \rightarrow 1 $ as $i\rightarrow \infty$.
		\item The one-forms $m_{i}^{1/2} d\log f_{i}\rightarrow 0$ as $i\rightarrow \infty$. 
		\item The associated constants $\mu_{i}$ from Equation (\ref{KKeq}) satisfy $\mu_{i}\rightarrow \lambda$ as $i\rightarrow \infty$.
		
	\end{enumerate}
	
\end{lemma} 
\begin{proof}
The first two items follow trivially from the requirement that $f_{i}^{m_{i}}\rightarrow e^{-\varphi}$.  Item (3) is proved by Case in \cite{Case2} Proposition 4.11.
\end{proof}
We remark that the normalisation of the limiting soliton potential $\varphi$ is actually fixed by $\lim_{i\rightarrow \infty}m_{i}(\lambda-\mu_{i})$. This is also proved by Case in Proposition 4.11 of \cite{Case2}. 
Next, we compute the stability operator $N$ applied to the Ricci variation.
\begin{lemma}\label{rvarLem}
Let $(B^{n}\times F^{m}, \bar{g}\oplus f^{2}\tilde{g})$ be a warped product Einstein manifold with $\lambda$ and $\mu$ as in equation (\ref{KKeq}) and let $h$ be the Ricci variation (\ref{BHMvar}). Then
\begin{align*}
\langle \frac{1}{2}\Delta h+\Riem(h,\cdot),h\rangle_{L^{2}(g)} =\vol(F)\Bigg( &\lambda\|h\|_{L^{2}(f^{m}dV_{\bar{g}})}^{2}+\frac{m}{m-1}\Bigg\langle \frac{1}{2}\Delta(\overline{\Delta }f +\lambda f)\bar{g}\\
&+m\mu(f^{-3}df\otimes df),h\Bigg\rangle_{L^{2}(f^{m}dV_{\bar{g}})} \Bigg).\\
\end{align*}
\end{lemma}
\begin{proof}

For an arbitrary function $\Phi $ we recall that in coordinates
$$\nabla^{2}\Phi_{AB} = \frac{\partial^{2}\Phi}{\partial x_{A}\partial x_{B}}-\Gamma_{AB}^{C}\frac{\partial \Phi}{\partial x_{C}}.$$
Hence we can use the formula
$$\Gamma_{\alpha \beta}^{c} = -\tilde{g}_{\alpha \beta}f\overline{g}^{cd}(\overline{\nabla}_{d}f),$$
from Lemma \ref{WPcon} to obtain
$$\nabla^{2}\Phi=\overline{\nabla}^{2}\Phi\oplus f\langle \overline{\nabla}\Phi,\overline{\nabla} f\rangle \tilde{g}.$$
Using this, and the fact that ${g=\bar{g}\oplus f^{2}\tilde{g}}$, we can write the tensor $h$ in the following manner,
$$h = m\nabla^{2}f+\frac{m}{m-1}(\overline{\Delta}f+\lambda f)g - \frac{m}{m-1}(f^{2}\overline{\Delta}f+\lambda f^{3} +(m-1)f|\overline{\nabla}f|^{2})\tilde{g}.$$
Thus equation (\ref{KKeq}) yields ${f^{2}\overline{\Delta}f+\lambda f^{3} +(m-1)f|\overline{\nabla}f|^{2} = f\mu}$ and so
$$h = m\nabla^{2}f+\frac{m}{m-1}(\overline{\Delta}f+\lambda f)g-\frac{\mu m f}{m-1}\tilde{g}.$$	
We will now compute the integral ${\langle \frac{1}{2}\Delta h+\Riem(h,\cdot),h\rangle_{L^{2}(g)}}$ term-by-term.  Let
$$\mathcal{T}_{1} = m\nabla^{2}f, \quad \mathcal{T}_{2} = \frac{m}{m-1}(\overline{\Delta}f+\lambda f)g \quad \textrm{ and } \quad \mathcal{T}_{3} = \frac{\mu m f}{m-1}\tilde{g}.$$

 We begin by computing $\langle N(\mathcal{T}_{1}),h\rangle_{L^{2}(g)}$. For an arbitrary function $\Phi $, we note the identity
$$\langle\nabla^{2}\Phi,H\rangle_{L^{2}(g)} = \langle \div^{\ast}(\nabla\Phi),H\rangle_{L^{2}(g)} = \langle \nabla\Phi,\div(H) \rangle_{L^{2}(g)},$$
for any $H \in TM^{\ast}\otimes TM^{\ast}$.
Hence, as it is divergence-free by Lemma \ref{Lem52}, $h$ is $L_{2}-$orthogonal to the Hessian of any function.
In the proof of their Lemma 3.5 in \cite{CH}, Cao--He show that for an Einstein metric
$$\frac{1}{2}\Delta\nabla^{2}\Phi+\Riem(\nabla^{2}\Phi,\cdot)	=\nabla^{2}\Delta \Phi+\lambda\nabla^{2}\Phi,$$
for any function $\Phi$. Applying these two identities to $\mathcal{T}_{1}$ we obtain
$$\langle \frac{1}{2}\Delta (m\nabla^{2}f)+\Riem(m\nabla^{2}f,\cdot),h\rangle_{L^{2}(g)} = \langle m\nabla^{2}(\Delta f+\lambda f),h  \rangle_{L^{2}(g)} = 0 = \lambda \langle  m\nabla^{2}f,h\rangle_{L^{2}(g)}.
$$
Thus
$$\langle \frac{1}{2}\Delta \mathcal{T}_{1}+\Riem(\mathcal{T}_{1},\cdot),h\rangle_{L^{2}(g)} = \lambda \langle\mathcal{T}_{1},h\rangle_{L^{2}(g)}. $$
\\
To compute the integral for the second term $\mathcal{T}_{2}$ we note that
$$\Delta(\Phi g) = (\Delta \Phi)g,$$
for any function $\Phi$ and any Riemannian metric $g$. We also have
$$\Riem(\Phi g,\cdot) = \lambda \Phi g,$$
for any function $\Phi$ any Einstein metric $g$ with Einstein constant $\lambda$.  
Hence we conclude
$$\langle \frac{1}{2}\Delta \mathcal{T}_{2}+\Riem(\mathcal{T}_{2},\cdot),h\rangle_{L^{2}(g)} = \lambda \langle\mathcal{T}_{2},h\rangle_{L^{2}(g)}+\frac{m}{m-1}\left(\frac{1}{2}\Delta(\overline{\Delta}f+\lambda f)\right)g.$$
To deal with the final term $\mathcal{T}_{3}$ we note that pointwise $\langle \tilde{g},h\rangle = 0 = \langle \mathcal{T}_{3},h\rangle$. This is clear using the original definition of $h$ in Equation (\ref{BHMvar}). Using Lemma \ref{LapLem} and Lemma \ref{COLem} we obtain
$$ \langle \frac{1}{2}\Delta(f\tilde{g})+\Riem(f\tilde{g},\cdot), h\rangle_{L^{2}(g)} = \langle m(f^{-3}df\otimes df-f^{-2}\overline{\nabla}^{2}f),h\rangle_{L^{2}(g)}. $$
Hence
$$\langle \frac{1}{2}\Delta \mathcal{T}_{3}+\Riem(\mathcal{T}_{3},\cdot),h\rangle_{L^{2}(g)} = \lambda \langle\mathcal{T}_{3},h\rangle_{L^{2}(g)}+\frac{m \mu}{m-1}\left(\langle m(f^{-3}df\otimes df-f^{-2}\overline{\nabla}^{2}f),h\rangle_{L^{2}(g)} \right).$$
Putting the three calculations for the $\mathcal{T}_{i}$ together we see that $h$ 
is almost a $\lambda$-eigentensor for the stability operator $N$.
\begin{align*}\label{Errorterm}
\langle \frac{1}{2}\Delta h+\Riem(h,\cdot),h\rangle_{L^{2}(g)} =&  \lambda\|h\|_{L^{2}(g)}^{2}+\frac{m}{m-1}\Bigg\langle \frac{1}{2}\Delta(\overline{\Delta}f+\lambda f)\bar{g}
\end{align*}
\begin{align}
&-m\mu(f^{-3}df\otimes df-f^{-2}\overline{\nabla}^{2}f),h\Bigg\rangle_{L^{2}(g)}.
\end{align}

We now observe two further identities for an arbitrary function $\Phi \in C^{\infty}(B)$, firstly the general identity 
$$\nabla^{2}\Phi^{-1} = -\frac{1}{\Phi^{2}}\nabla^{2}\Phi+\frac{2}{\Phi^{3}}d\Phi\otimes d\Phi.$$
The second is that, as $h$ is both divergence-free and pointwise orthogonal to tensors of the form $0\oplus \tilde{h}$, we have
$$0=\langle \nabla^{2}\Phi,h\rangle_{L^{2}(g)} =\langle\overline{\nabla}^{2}\Phi\oplus f\langle \overline{\nabla}\Phi,\overline{\nabla} f\rangle \tilde{g},h\rangle_{L^{2}(g)} =\langle\overline{\nabla}^{2}\Phi,h\rangle_{L^{2}(g)}.$$
Hence equation (\ref{Errorterm}) becomes
$$\langle \frac{1}{2}\Delta h+\Riem(h,\cdot),h\rangle_{L^{2}(g)} = \lambda\|h\|^{2}_{L^{2}(g)}+\frac{m}{m-1}\langle(\frac{1}{2}\Delta(\overline{\Delta }f+\lambda f)\bar{g}+m\mu(f^{-3}df\otimes df),h\rangle_{L^{2}(g)}.$$
The result follows by computing the integrals over the base $B$ and fibre $F$. 
\end{proof}
We can now prove Theorem \ref{thmB}.
\begin{proof} [Proof of Theorem \ref{thmB}]
Using Lemma (\ref{Lem1}), we see that the warped product ${(B^{n}\times F^{m_{i}},\bar{g}_{i}\oplus f_{i}^{2}\tilde{g})}$ is unstable if
$$ \langle \frac{1}{2}\Delta h_{i}+\Riem(h_{i},\cdot),h_{i}\rangle_{L^{2}({g}_{i})}  -\lambda\frac{\left(\int_{M}\tr(h_{i}) dV_{{g}_{i}} \right)^{2}}{(n+m_{i})\vol(M)}>0,$$
where the $h_{i}$ are the Ricci variations defined by the metrics $\bar{g}_{i}$, the functions $f_{i}$ and the constants $m_{i}$.  Thus we consider the behaviour, as $i\rightarrow \infty$ of the sequence 
$$a_{i} = \vol(F_{i})^{-1}\left(\langle \frac{1}{2}\Delta h_{i}+\Riem(h_{i},\cdot),h_{i}\rangle_{L^{2}({g}_{i})}  -\lambda\frac{\left(\int_{M}\tr(h_{i}) dV_{{g}_{i}} \right)^{2}}{(n+m_{i})\vol(M)}\right).$$
The sequence $a_{i}$ does not in fact depend at all on the fibres $F_{i}$.\\ 
\\
Using Lemma \ref{rvarLem} we need to consider the limit as $i\rightarrow \infty$ of
$$
 \lambda\|h_{i}\|_{L^{2}(\bar{g}_{i})}^{2}+\frac{m_{i}}{m_{i}-1}\langle(\frac{1}{2}\Delta(\overline{\Delta}f_{i}+\lambda f_{i})\bar{g_{i}}+m\mu(f_{i}^{-3}df_{i}\otimes df_{i}),h_{i}\rangle_{L^{2}(\bar{g}_{i})}.
$$
\\
As Lemma \ref{convLem} gives $f_{i} \rightarrow 1$, $d\log f_{i}^{m_{i}^{1/2}} \rightarrow 0$, and $\mu_{i}\rightarrow \lambda$ we see
$$\langle(\frac{1}{2}\Delta(\overline{\Delta }f_{i}+\lambda f_{i})\bar{g}_{i}+m_{i}\mu_{i}(f_{i}^{-3}df_{i}\otimes df_{i}),h_{i}\rangle_{L^{2}(f_{i}^{m_{i}}dV_{\bar{g}_{i}})} \rightarrow 0.$$
We also have
$$\vol(F_{i})^{-1} \lambda\frac{\left(\int_{M}\tr(h_{i}) dV_{g_{i}} \right)^{2}}{(m_{i}+n)\vol(M)} = \lambda\frac{\left(\int_{B}\tr(h_{i}) f_{i}^{m_{i}}dV_{\bar{g}_{i}} \right)^{2}}{(m_{i}+n)\vol(B,f^{m_{i}}dV_{\bar{g}_{i}})} \rightarrow 0.$$

The limit of the $a_{i}$ is thus
$$\lambda\|h_{\infty}\|^{2}_{L^{2}(e^{-\varphi}dV{\bar{g}_{\infty}})},$$
where $h_{\infty} =\Ric(\bar{g}_{\infty})$ is the limit of the $h_{i}$ as $i \rightarrow \infty$ (the limit exists as $g_{i}$  converges in the $C^{\infty}$ topology). 
 Hence we see for large enough $i$,
$$\int_{M}\langle N(h_{i}+cg_{i}),h_{i}+cg_{i}\rangle \ d V_{g} > 0,$$
and the result follows.
\end{proof}	
\bibliography{QEMRefs}
\bibliographystyle{acm}

\end{document}